\newtheorem{theorem}{Theorem}[section]
\newtheorem*{theorem A}{Theorem A}
\newtheorem*{theorem B}{N\"olker's Theorem}
\theoremstyle{remark}
\newtheorem{remark}{Remark}[section]
\theoremstyle{remark}
\theoremstyle{definition}
\numberwithin{equation}{section}
\def\({\left ( }
\def\){\right )}
\def\<{\left < }
\def\>{\right >}
\begin{document}
\title[Translation hypersurfaces with constant curvature]{Translation
hypersurfaces with constant curvature in 4-dimensional isotropic space}
\author{Muhittin Evren Aydin, Alper Osman Ogrenmis}
\address{Department of Mathematics, Faculty of Science, Firat University,
Elazig, 23200, Turkey}
\email{meaydin@firat.edu.tr, aogrenmis@firat.edu.tr}
\thanks{}
\subjclass[2000]{53A10, 53A35, 53B25.}
\keywords{Translation hypersurface, isotropic space, absolute figure,
Gauss-Kronecker curvature, mean curvature.}

\begin{abstract}
There exist four non-equivalent types of the translation hypersurfaces in
the 4-dimensional isotropic space $\mathbb{I}^{4}$ generated by translating
the curves lying in perpendicular $k-$planes $\left(k=2,3\right) $, due to
its absolute figure. In arbitrary dimensional case; constant Gauss-Kronecker
and mean curvature translation hypersurfaces of type 1, i.e. the
hypersurfaces whose the translating curves lie in perpendicular isotropic $%
2- $planes, were investigated by the same authors in \cite{AO}. The present
study concerns such hypersurfaces in $\mathbb{I}^{4}$ of other three types.
\end{abstract}

\maketitle

\section{Introduction}

Dillen et al. \cite{DVZ} introduced a \textit{translation hypersurface} $%
M^{n-1}$ in a $n-$dimensional Euclidean space $\mathbb{R}^{n}$ as the graph
of the form%
\begin{equation}
y_{n}=f_{1}\left( y_{1}\right) +....+f_{n-1}\left( y_{n-1}\right) , 
\tag{1.1}
\end{equation}%
where $\left( y_{1},...,y_{n}\right) $ denote orthogonal coordinates in $%
\mathbb{R}^{n}$ and $f_{1},...,f_{n}$ smooth functions of single variable.
They proved that if $M^{n-1}$ is minimal, it is either a hyperplane or $%
M^{n-1}=M^{2}\times \mathbb{R}^{n-3}$, where $M^{2}$ is the \textit{Scherk's
minimal surface }(\cite{Sc}) given in explicit form%
\begin{equation*}
y_{3}=c^{-1}\left( \ln \left\vert \cos \left( cy_{2}\right) \right\vert -\ln
\left\vert \cos \left( cy_{1}\right) \right\vert \right) ,\text{ }c\in 
\mathbb{R},\text{ }c\neq 0.
\end{equation*}

In many different ambient spaces, one was tried to generalize the Scherk's
result as defining the translation (hyper)surfaces, see \cite%
{DGVDW,DVDWV,GVDW,ILM,Li,LMu,LM,Lo,Su,VWY,YF}.

In addition, Seo \cite{Se} extended the above result to the translation
hypersurfaces with arbitrary constant mean and Gauss-Kronecker curvature.

Most recently, Munteanu et al. \cite{MPRH} initated a different notion on
this framework, so-called \textit{translation graph}. Obviously, they
defined that a \textit{translation graph} in $\mathbb{R}^{p+q}$ is given in
explicit form%
\begin{equation*}
y_{p+q}\left( y_{1},y_{2},...,y_{p+q-1}\right) =f_{1}\left(
y_{1},...,y_{p}\right) +f_{2}\left( y_{p+1},...,y_{p+q-1}\right) ,
\end{equation*}%
providing certain minimality results. In addition, Moruz and Munteanu \cite%
{MM} concerned the minimal graphs of the form%
\begin{equation*}
y_{4}\left( y_{1},y_{2},y_{3}\right) =f_{1}\left( x_{1}\right) +f_{2}\left(
x_{2},x_{3}\right) ,
\end{equation*}%
which can be expressed as the sum of a curve in $y_{1}y_{4}-$plane and a
surface in $y_{2}y_{3}y_{4}-$space.

Note that the graph of the form (1.1) is formed by translating $n-1$ curves
(called \textit{generating curves}) lying in mutually perpendicular
2-planes. As the restrictions on generating curves are removed, the
different kinds of the translation hypersurfaces arise. For example; in the
particular case $n=3,$ Liu and Yu \cite{LY} introduced the notion of \textit{%
affine translation surface,} i.e., the surface whose the generating curves
lie in non-perpendicular planes. They obtained minimal affine translation
surfaces, so called \textit{affine Scherk surfaces}. Furthermore, arbitrary
constant mean curvature and Weingarten affine translation surfaces were
given in \cite{JLL,LJ}.

This notion was generalized to arbitrary dimension by the first author in 
\cite{Ay2}, defining that an\textit{\ affine translation hypersurface }in $%
\mathbb{R}^{n}$ is the graph of the form 
\begin{equation*}
y_{n}\left( y_{1},y_{2},...,y_{n-1}\right) =f_{1}\left( \zeta _{1}\right)
+...+f_{n-1}\left( \zeta _{n-1}\right) ,
\end{equation*}%
where $\zeta _{i}=\sum_{j=1}^{n-1}a_{ij}y_{j},$ $i=1,...,n-1,$ $\det
(a_{ij})\neq 0$ and $\left[ a_{ij}\right] $ is non-orthogonal matrix. He
proved that such a hypersurface with constant Gauss-Kronecker curvature in $%
\mathbb{R}^{n}$ is congruent to a cylinder.

In this study, we are interested in the counterparts of translation
hypersurfaces in isotropic geometry, i.e., a particular Cayley-Klein
geometry. For details, see \cite{K,OS,Y}. In 3-dimensional isotropic space $%
\mathbb{I}^{3},$ when the generating curves lie in perpendicular planes,
three types of translation surfaces exist up to the absolute figure:

\begin{enumerate}
\item[Type 1.] Both generating curves lie in isotropic planes; that is, a
graph of $x_{3}\left( x_{1},x_{2}\right) =f\left( x_{1}\right) +g\left(
x_{2}\right),$ where $\left( x_{1},x_{2},x_{3}\right) $ denote the
isotropically orthogonal coordinates in $\mathbb{I}^{3}$.

\item[Type 2.] One generating curve lies in non-isotropic plane and other in
isotropic plane; that is, a graph of $x_{2}\left( x_{1},x_{3}\right)
=f\left( x_{1}\right) +g\left( x_{3}\right) .$

\item[Type 3.] Both generating curves lie in non-isotropic planes; that is,
a graph of $x_{1}\left( x_{2},x_{3}\right) =\frac{1}{2}\left( f\left(
x_{2}+x_{3}-\pi /2\right) +g\left( \pi /2-x_{2}+x_{3}\right) \right) .$
\end{enumerate}

As well as the non-isotropic planes, Strubecker \cite{St} obtained the
minimal translation surfaces in $\mathbb{I}^{3},$ so called \textit{%
isotropic Scherk's surfaces of type 1,2,3} and respectively given in
explicit form $x_{3}=c\left( x_{1}^{2}-x_{2}^{2}\right)$, $%
x_{2}=c^{-1}\left( \ln \left\vert cx_{3}\right\vert -\ln \left\vert \cos
cx_{1}\right\vert \right) $ and%
\begin{equation*}
x_{1}=\left( 2c\right) ^{-1}\left( \ln \left\vert \cos c\left(
x_{2}+x_{3}-\pi /2\right) \right\vert -\ln \left\vert \cos c\left( \pi
/2-x_{2}+x_{3}\right) \right\vert \right) ,\text{ }c\in \mathbb{R},\text{ }%
c\neq 0.
\end{equation*}

Afterwards, his results were generalized by Sipus \cite{MS} to the
translation surfaces in $\mathbb{I}^{3}$ with arbitrary constant Gaussian
and mean curvature. The situation that the generating curves in $\mathbb{I}%
^{3}$ are non-planar or lie in non-perpendicular planes extends the above
categorization and the results, see \cite{Ay1}.

In $\mathbb{I}^{4},$ there are four types of translation hypersurfaces whose
the generating curves lie in mutually perpendicular $k-$planes $\left(
k=2,3\right) ,$ see Section 3. In more generale case, i.e. in arbitrary
dimensional isotropic spaces, the translation hypersurfaces whose the
generating curves lie in isotropic $2-$planes, said to be \textit{of type 1,}
were investigated in \cite{AO}. The present study deals with other three
types of translation hypersurfaces in $\mathbb{I}^{4}$ with constant
Gauss-Kronecker and mean curvature.

In addition, due to the absolute figure of $\mathbb{I}^{n}$ $n\geq 3,$ the
graph hypersurfaces associated with the form $x_{n}=f\left(
x_{1},...,x_{n-1}\right) $ differ from other hypersurfaces, for a smooth
real-valued function $f$. For example; the Gauss-Kronecker and mean
curvature for such a graph hypersurface in $\mathbb{I}^{n}$ correspond to
the determinant and trace of the Hessian of $f$, respectively. The formulas
of these curvatures were initiated by Chen et al. \cite{CDV}, besides
obtaining flat and minimal graphs associated with most famous production
models in microecenonomics.

As far as we know, this is first study formulating such fundamental
curvatures of a generic hypersurface in $\mathbb{I}^{n}$.

\section{Preliminaries}

Some differential geometric approaches on the curves and the hypersurfaces
in isotropic geometry can be found in \cite{D,EDH,MSD1,MSD2,PGM,PO,S1,S2}.

Let $\mathbb{P}^{n}$ denote the $n-$dimensional real projective space, $%
\omega $ a hyperplane in $\mathbb{P}^{n}$ and $\mathbb{I}^{n}=\mathbb{P}%
^{n}\backslash \omega $ the obtained affine space. We call $\mathbb{I}^{n}$ 
\textit{\ $n-$dimensional isotropic space} if $\omega $ contains a
hypersphere $\mathbb{S} $ with null radius. Then the pair $\left\{ \omega ,%
\mathbb{S}\right\} $ is called \textit{absolute figure} of $\mathbb{I}^{n}$
determined in the homogeneous coordinates by%
\begin{equation*}
\omega :u_{0}=0,\text{ }\mathbb{S}:u_{0}=u_{1}^{2}+...+u_{n-1}^{2}=0.
\end{equation*}%
The vertex of $\mathbb{S}$ is $F\left( 0:0:...:1\right) $ which we call 
\textit{absolute point}.

Denote the affine coordinates $x_{1}=\frac{u_{1}}{u_{0}},...,x_{n}=\frac{%
u_{n}}{u_{0}},$ $u_{0}\neq 0.$ Then the group of motions of $\mathbb{I}^{n}$
which preserves the absolute figure is given in terms of affine coordinates
by%
\begin{equation*}
\begin{bmatrix}
A & 0 \\ 
B & 1%
\end{bmatrix}%
,
\end{equation*}%
where $A$ is an orthonogal $\left( n-1,n-1\right) -$matrix, $B$ a real $%
\left( 1,n-1\right) -$matrix.

Let $p=\left( p_{1},...,p_{n}\right) ,$ $q=\left( q_{1},...,q_{n}\right) $
be two points in $\mathbb{I}^{n}.$ The \textit{isotropic distance} between $%
p $ and $q$ is defined by%
\begin{equation*}
d_{i}\left( p,q\right) =\sqrt{\sum_{i=1}^{n-1}\left( p_{i}-q_{i}\right) ^{2}}%
.
\end{equation*}%
If $d_{i}=0,$ then the so-called \textit{range} between $p$ and $q$ is
defined as $d_{i}^{r}=\left\vert p_{n}-q_{n}\right\vert .$

A line is said to be \textit{isotropic} if its point at infinity is
absolute. Other lines are \textit{non-isotropic}. We call a $k-$plane 
\textit{isotropic }(\textit{non-isotropic}) if it contains (does not) an
isotropic line. In the affine model of $\mathbb{I}^{n}$, the isotropic lines
and the isotropic $k-$planes are parallel to the $x_{n}-$axis. For example;
the following 
\begin{equation*}
a_{1}x_{1}+...+a_{n}x_{n}=b, \text{ } a_{i},b \in \mathbb{R},
\end{equation*}
determines an isotropic (non-isotropic) hyperplane if $a_{n}=0$ $\left( \neq
0\right) .$

Note that the hyperplane $x_{n}=0,$ so-called basic hyperplane, is Euclidean
and therefore the Euclidean metric is used in it.

As distinct from the Euclidean case, the orthogonality in $\mathbb{I}^{n}$
does not mean the perpendicularity. Obviously, two non-isotropic lines are
orthogonal if their projections onto the basic hyperplane are perpendicular
up to the Euclidean metric. Nevertheless, an isotropic line is orthogonal to
some non-isotropic line. As a consequence, each non-isotropic hyperplane is
orthogonal to the isotropic one. In addition, two isotropic hyperplanes are
orthogonal if their projections onto the basic hyperplane are perpendicular.

We call a curve \textit{isotropic} (\textit{non-isotropic}) $k-$\textit{%
planar} if it lies in an isotropic (non-isotropic) $k-$plane.

\subsection{Curvature theory of hypersurfaces}

This part of isotropic geometry is close to the Euclidean case.

Let $M^{n-1}$, $n \ge 3$, be a hypersurface in $\mathbb{I}^{n}$ whose the
tangent hyperplane at each point is non-isotropic. Then the coefficients $%
g_{ij}$ of the first fundamental form are calculated by the induced metric
from $\mathbb{I}^{n}.$ The normal vector field $U$ is completely isotropic,
i.e. $\left( 0,0,...,1\right) .$

For the second fundamental form, let us consider a curve $r$ on $M^{n-1}$
with isotropic arclength $s$ and the tangent vector $t\left( s\right)
=r^{\prime }\left( s\right) =\frac{dr}{ds}.$ Denote $S$ the projection of $%
r^{\prime \prime }\left( s\right) =\frac{d^{2}r}{ds^{2}}$ onto the tangent
hyperplane of $M^{n-1}.$ Then, the following decomposition occurs:%
\begin{equation*}
r^{\prime \prime }\left( s\right) =\kappa _{g}\sigma +\kappa _{n}U,
\end{equation*}%
where $\kappa _{g}$ and $\kappa _{n}$ are \textit{geodesic} and \textit{%
normal curvatures} of $r,$ respectively. Hence, it follows $\kappa
_{g}=\left\Vert r^{\prime \prime }\left( s\right) \right\Vert _{i},$ where
we mean the induced norm by $\left\Vert \cdot \right\Vert _{i}.$ In
addition, by a direct computation, we have%
\begin{equation}
\kappa _{n}=\frac{1}{\sqrt{\det g_{ij}}}\sum_{i,j=1}^{n-1}\det \left(
r_{x_{1}},...,r_{x_{n-1}},r_{x_{i}x_{j}}\right) \frac{dx_{i}}{ds}\frac{dx_{j}%
}{ds},  \tag{2.1}
\end{equation}%
where $r_{x_{i}}=\frac{\partial r}{\partial x_{i}}$ and $r_{x_{i}x_{j}}=%
\frac{\partial ^{2}r}{\partial x_{i}\partial x_{j}},$ $1\leq i,j\leq n-1.$
If we put 
\begin{equation*}
h_{ij}=\frac{\det \left( r_{x_{1}},...,r_{x_{n-1}},r_{x_{i}x_{j}}\right) }{%
\sqrt{\det g_{ij}}}
\end{equation*}%
into (2.1) then one can be written in the matrix form as%
\begin{equation}
\kappa _{n}=\tilde{t}^{T}\cdot \left[ h_{ij}\right] \cdot \tilde{t},\text{ }%
\tilde{t}=\left( \frac{dx_{1}}{ds},...,\frac{dx_{n-1}}{ds}\right) ^{T}, 
\tag{2.2}
\end{equation}%
where "$\cdot $" denotes the matrix multiplication. If $r$ is a curve with
arbitrary parameter, then (2.2) turns to 
\begin{equation*}
\kappa _{n}=\frac{\tilde{t}^{T}\cdot \left[ h_{ij}\right] \cdot \tilde{t}}{%
\tilde{t}^{T}\cdot \left[ g_{ij}\right] \cdot \tilde{t}}.
\end{equation*}

The extreme values of $\kappa _{n}$ which we call \textit{principal
curvatures} correspond to the eigenvalues of the matrix $\left[ h_{ij}\right]
\cdot \left[ g_{ij}\right] ^{-1}.$ Denote the principal curvatures $\kappa
_{1},...,\kappa _{n-1}$ and $\left[ a_{ij}\right] =\left[ h_{ij}\right]
\cdot \left[ g_{ij}\right] ^{-1}$. Therefore, the characteristic equation of 
$\left[ a_{ij}\right] $ follows%
\begin{equation*}
\det \left( \left[ a_{ij}\right] -\lambda I_{n-1}\right) =\lambda ^{n-1}-tr%
\left[ a_{ij}\right] \lambda ^{n-2}+...+\left( -1\right) ^{n-1}\det \left[
a_{ij}\right] =0,
\end{equation*}%
which provides the fundamental curvatures, called \textit{isotropic
Gauss-Kronecker curvature} (or \textit{relative curvature}) and \textit{%
isotropic mean curvature}. We shortly call them \textit{Gauss-Kronecker}\ $%
\left( K\right) $ and \textit{mean curvature }$\left( H\right) $\textit{.}
Obviously, one obtains%
\begin{equation*}
K=\kappa _{1}...\kappa _{n-1}=\det \left[ a_{ij}\right] \text{ or }K=\frac{%
\det \left[ h_{ij}\right] }{\det \left[ g_{ij}\right] }
\end{equation*}%
and%
\begin{equation*}
\left( n-1\right) H=\kappa _{1}+...+\kappa _{n-1}=tr\left[ a_{ij}\right] ,
\end{equation*}%
where $tr$ denotes the trace of a matrix.

A hypersurface is said to be flat (minimal) if $K$ $\left( H\right)$ is
identically zero.

Notice that the isotropic counterpart for the notion of \textit{shape
operator} in the Euclidean (or Riemannian) sense of a hypersurface is indeed
a zero map. In $\mathbb{I}^{n}$, the matrix $\left[ a_{ij}\right]$ however
plays its role.

\section{Categorization of translation hypersurfaces}

Let $M^{3}$ be a translation hypersurface in $\mathbb{I}^{4}$ generated by
translating three curves lying in perpendicular $k-$planes, $k=2,3.$ Denote
the generating curves $\alpha ,\beta ,\gamma .$ Up to the absolute figure of 
$\mathbb{I}^{4}$, there are four types of such hypersurfaces listed as below:

\begin{enumerate}
\item[Type 1.] Three of $\alpha ,\beta ,\gamma $ are isotropic 2-planar.
Then $M^{3}$ is parameterized by%
\begin{equation*}
r\left( u,v,w\right) =\left( u,v,w,f\left( u\right) +g\left( u\right)
+h\left( w\right) \right) ,
\end{equation*}%
where $\alpha ,\beta $ and $\gamma $ lie in $x_{1}x_{4}-$plane, $x_{2}x_{4}-$%
plane and $x_{3}x_{4}-$plane, respectively.

\item[Type 2.] $\alpha $ is non-isotropic 2-planar and $\beta ,\gamma $
isotropic 2-planar. Then $M^{3}$ is parameterized by%
\begin{equation*}
r\left( u,v,w\right) =\left( u+v,w,f\left( u\right) ,g\left( u\right)
+h\left( w\right) \right) ,
\end{equation*}%
where $\alpha ,\beta $ and $\gamma $ lie in $x_{1}x_{3}-$plane, $x_{1}x_{4}-$%
plane and $x_{2}x_{4}-$plane, respectively. The regularity implies that $f$
is a non-constant function.

\item[Type 3.] $\alpha ,\beta $ are non-isotropic 2-planar and $\gamma $
isotropic 2-planar. Then $M^{3}$ is parameterized by%
\begin{equation*}
r\left( u,v,w\right) =\left( u+v+w,f\left( u\right) ,g\left( v\right)
,h\left( w\right) \right) ,
\end{equation*}%
where $\alpha ,\beta $ and $\gamma $ lie in $x_{1}x_{2}-$plane, $x_{1}x_{3}-$%
plane and $x_{1}x_{4}-$plane, respectively. The regularity implies that
neither $f$ nor $g$ is a constant function.

\item[Type 4.] Three of $\alpha ,\beta ,\gamma $ are non-isotropic
hyperplanar. The curves $\alpha ,\beta ,\gamma $ and the hyperplanes $%
P_{\alpha },P_{\beta },P_{\gamma }$ containing them can be choosen as%
\begin{equation*}
\left. 
\begin{array}{l}
\alpha \left( u\right) =\left( f\left( u\right) ,u,u,u+\pi \right) ,\text{ }%
P_{\alpha }:-2x_{2}+x_{3}+x_{4}=\pi ; \\ 
\beta \left( v\right) =\left( g\left( v\right) ,v,v,-v+\frac{\pi }{3}\right)
,\text{ }P_{\beta }:2x_{2}+x_{3}+3x_{4}=\pi ; \\ 
\gamma \left( w\right) =\left( h\left( w\right) ,6w,-w,w-\frac{\pi }{2}%
\right) ,\text{ }P_{\gamma }:x_{2}+4x_{3}-2x_{4}=\pi .%
\end{array}%
\right.
\end{equation*}%
Then $M^{3}$ is parameterized by%
\begin{equation*}
\left. 
\begin{array}{l}
r\left( u,v,w\right) = \\ 
\left( f\left( u\right) +g\left( v\right) +h\left( w\right)
,u+v+6w,u+v-w,u-v+w+\frac{5\pi }{6}\right) .%
\end{array}%
\right.
\end{equation*}%
The regularity implies that $\frac{df}{du} -\frac{dg}{dv} \neq 0$.
\end{enumerate}

A translation hypersurface of above one type is no equivalent to that of
other type due to the absolute figure of $\mathbb{I}^4$.

We hereinafter denote the derivatives of $f,g,h$ with respect to the given
variable by a prime and so.

\section{Translation hypersurfaces of type 2}

The Gauss-Kronecker and the mean curvature respectively follows%
\begin{equation}
K=\frac{g^{\prime }f^{\prime \prime }g^{\prime \prime }h^{\prime \prime }}{%
\left( f^{\prime }\right) ^{3}}  \tag{4.1}
\end{equation}%
and%
\begin{equation}
3H=\frac{f^{\prime \prime }g^{\prime }}{\left( f^{\prime }\right) ^{3}}%
+g^{\prime \prime }\frac{1+\left( f^{\prime }\right) ^{2}}{\left( f^{\prime
}\right) ^{2}}+h^{\prime \prime }.  \tag{4.2}
\end{equation}

\begin{theorem}
A flat translation hypersurface of type 2 in $\mathbb{I}^{4}$ is a
cylindrical hypersurface with non-isotropic rulings. Furthermore; if one has
nonzero constant Gauss-Kronecker curvature, then the following occurs:%
\begin{equation*}
f\left( u\right) =\lambda u^{\frac{1}{2}},\text{ }g\left( v\right) =\mu v^{%
\frac{3}{2}},\text{ }h\left( w\right) =\xi w^{2},
\end{equation*}%
where $\lambda ,\mu ,\xi \in \mathbb{R}$ and $\lambda \mu \xi \neq 0.$
\end{theorem}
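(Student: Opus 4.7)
The plan is to exploit the complete separation of variables in formula (4.1). By regularity $f'\not\equiv 0$, and (4.1) splits as a product
$$K=\frac{f''(u)}{(f'(u))^{3}}\,\cdot\,g'(v)g''(v)\,\cdot\,h''(w)$$
of three functions of the pairwise independent variables $u$, $v$, $w$. In the flat case $K\equiv 0$, this forces at least one of $f''$, $g'g''$, $h''$ to vanish identically (if none did, picking points where each is nonzero would yield a nonzero product). In each subcase I would inspect the parameterization and identify a constant partial derivative $r_{u}$, $r_{v}$ or $r_{w}$ along which $r$ is translation-invariant: $f''\equiv 0$ makes $r_{u}$ constant, $g'\equiv 0$ or $g''\equiv 0$ makes $r_{v}$ constant, and $h''\equiv 0$ makes $r_{w}$ constant. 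In every instance the relevant constant vector carries a nonzero entry in one of its first three coordinates---coming from the explicit $u+v$ or $w$ entries of $r$---and hence points in a non-isotropic direction. Therefore $M^{3}$ is a cylinder with non-isotropic rulings.

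For the nonzero constant case $K=k_{0}\neq 0$, the same separation argument now forces each factor to be a nonzero constant:
$$\frac{f''(u)}{(f'(u))^{3}}=c_{1},\qquad g'(v)g''(v)=c_{2},\qquad h''(w)=c_{3},$$
with $c_{1}c_{2}c_{3}=k_{0}$. I would integrate these three decoupled ODEs. From $h''=c_{3}$ one gets $h(w)=\tfrac{c_{3}}{2}w^{2}+Aw+B$, and absorbing the lower-order terms by a shift of $w$ and an isotropic translation of $x_{4}$ gives $h(w)=\xi w^{2}$. Rewriting $g'g''=\tfrac12\bigl((g')^{2}\bigr)'=c_{2}$ yields $(g')^{2}=2c_{2}v+C$; integrating and then shifting $v$ and translating $x_{4}$ reduces $g$ to $\mu v^{3/2}$. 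Analogously $f''/(f')^{3}=-\tfrac12\bigl((f')^{-2}\bigr)'=c_{1}$ gives $(f')^{-2}=-2c_{1}u+\widetilde{C}$, and integration followed by a shift of $u$ and translation of $x_{3}$ produces $f(u)=\lambda u^{1/2}$. The constraint $c_{1}c_{2}c_{3}=k_{0}\neq 0$ immediately forces $\lambda\mu\xi\neq 0$.

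The main potential obstacle is purely verification: in the flat case one must carefully check that the constant direction identified in each subcase is really non-isotropic (i.e.\ has a nonzero component among $x_{1},x_{2},x_{3}$), which is immediate from the explicit form of $r$; in the nonzero-$K$ case one must keep track of integration constants and verify that they can be absorbed by isotropic translations in $x_{3}$ and $x_{4}$ together with parameter shifts in $u,v,w$, all of which preserve the type-2 normal form. These are mechanical once the separation argument is in place.
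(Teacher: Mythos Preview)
Your proposal is correct and follows essentially the same approach as the paper: both exploit the separation of variables in (4.1), conclude in the flat case that one of $f''$, $g'g''$, $h''$ vanishes identically, and in the nonzero constant case set each factor equal to a nonzero constant and integrate the resulting ODEs. Your treatment of the flat case is actually slightly more explicit than the paper's---you identify the constant tangent vector $r_{u}$, $r_{v}$ or $r_{w}$ and verify directly from the parameterization that it has a nonzero component among the first three coordinates---whereas the paper simply observes that one of the generating curves degenerates to a non-isotropic line.
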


\begin{proof}
(4.1) follows that $K$ vanishes if at least one of $f,g,h$ is a linear
function with respect to the given variable, that is, at least one of the
generating curves turns to a non-isotropic line. Now, let assume that the
Gauss-Kronecker curvature is a nonzero constant $K_{0}.$ So, (4.1) leads to%
\begin{equation}
\frac{f^{\prime \prime }}{\left( f^{\prime }\right) ^{3}}=\lambda ,\text{ }%
g^{\prime }g^{\prime \prime }=\mu ,\text{ }h^{\prime \prime }=\xi , 
\tag{4.3}
\end{equation}%
for $\lambda ,\mu ,\xi \in \mathbb{R}$ and $K_{0}=\lambda \mu \xi \neq 0.$
After solving (4.3), we obtain%
\begin{equation*}
f\left( u\right) =\pm \frac{1}{\lambda }\sqrt{-2\lambda u+c_{1}}+c_{2},\text{
}g\left( v\right) =\pm \frac{1}{3\mu }\left( 2\mu v+c_{3}\right) ^{\frac{3}{2%
}}+c_{4}
\end{equation*}%
and%
\begin{equation*}
h\left( w\right) =\frac{\xi }{2}w^{2}+c_{5}w+c_{6},
\end{equation*}%
where $c_{1},...,c_{6}\in \mathbb{R}$. This concludes the proof.
\end{proof}

\begin{theorem}
Let $M^{3}$ be a minimal translation hypersurface of type 2 in $\mathbb{I}%
^{4}.$ Then it is either a non-isotropic hyperplane or $M^{3}=S^{2}\times 
\mathbb{R}$, where $S^{2}$ is the isotropic Scherk's surface of type 2 in $%
\mathbb{I}^{3}$.
\end{theorem}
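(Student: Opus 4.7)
The plan is to impose the minimality condition $H=0$ in formula (4.2) and to integrate the resulting identity by repeated separation of variables, using that $f$, $g$, $h$ depend on the distinct single variables $u$, $v$, $w$.

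First, the equation $3H=0$ reads
\[
\frac{f'' g'}{(f')^{3}} + g''\, \frac{1+(f')^{2}}{(f')^{2}} + h'' = 0.
\]
Since $h''(w)$ depends only on $w$ while the other two terms depend only on $(u,v)$, each side of the split must be constant; writing $h''(w)=-c$, $h$ is a quadratic polynomial in $w$ and there remains the identity
\[
\frac{f'' g'}{(f')^{3}} + g''\, \frac{1+(f')^{2}}{(f')^{2}} = c.
\]
Differentiating with respect to $u$ and using $\frac{d}{du}\bigl(\frac{1+(f')^{2}}{(f')^{2}}\bigr) = -\frac{2f''}{(f')^{3}}$ gives
\[
\Bigl(\tfrac{f''}{(f')^{3}}\Bigr)' g'(v) = \tfrac{2 f''}{(f')^{3}}\, g''(v),
\]
which by the usual separation argument forces either $f$ or $g$ to be affine, or else $(f''/(f')^{3})'/(f''/(f')^{3}) = 2 g''/g' = 2k$ for some non-zero constant $k$.

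Second, I would carry out the case analysis. Each sub-case in which $f$ or $g$ is affine, when fed back into the remaining identity under the regularity assumption that $f$ be non-constant, turns out to force $c=0$, so $h$ is also affine, and a direct calculation then identifies $M^{3}$ as a non-isotropic hyperplane. The essential sub-case is $k\neq 0$, where $g'(v) = \beta e^{kv}$ and $f''(u)/(f'(u))^{3} = \alpha e^{2ku}$. Substitution back into the remaining identity again forces $c=0$ (so $h$ is affine) together with the compatibility $\frac{1+(f')^{2}}{(f')^{2}} = -\frac{\alpha}{k}\, e^{2ku}$. The change of variable $f'=\tan\theta$ linearises the resulting ODE to $\sin\theta \propto e^{ku}$, from which $u = -k^{-1}\ln|\cos(k(f-D))| + \mathrm{const}$, while $g(v) = (\beta/k)\, e^{kv} + \mathrm{const}$.

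Third, with $h$ affine, the isotropic motion $x_{4}\mapsto x_{4}-h'(0)\, x_{2}$ is an isometry of $\mathbb{I}^{4}$ that converts $M^{3}$ into a cylinder whose rulings lie in the non-isotropic $x_{2}$-direction, so $M^{3}=S^{2}\times \mathbb{R}$, where $S^{2}$ is the cross-section in the $\mathbb{I}^{3}$ with coordinates $(x_{1},x_{3},x_{4})$. Substituting the explicit expressions for $f$ and $g$ into the parametrisation and absorbing a harmless translation in $x_{3}$, the graph equation for $S^{2}$ reduces to $x_{1}=k^{-1}(\ln|kx_{4}|-\ln|\cos(k\tilde{x}_{3})|)$, which is precisely the isotropic Scherk's surface of type $2$ in $\mathbb{I}^{3}$. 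The main obstacle I anticipate is the careful bookkeeping through the affine sub-cases to verify that each of them truly collapses to a hyperplane under the type $2$ regularity, and the sign- and constant-matching needed to recognise $S^{2}$ as Scherk type $2$ after the isotropic motion.
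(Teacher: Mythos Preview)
Your overall strategy matches the paper's --- isolate $h'' = -c$ as a constant, then separate the remaining $(u,v)$-identity --- though you proceed by differentiating in $u$ whereas the paper divides through by $g'\,[1+(f')^{2}]/(f')^{2}$ and separates directly. Your main case $k\neq 0$ is handled correctly, forces $c=0$, and recovers the Scherk product just as the paper does; the isotropic shear $x_4\mapsto x_4-h'(0)\,x_2$ is a clean way to exhibit the $\times\,\mathbb{R}$ factor.

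The genuine gap is in the affine sub-cases. Your assertion that when $f$ or $g$ is affine the identity ``turns out to force $c=0$'' is false. If $g(v)=g_0 v$ with $g_0\neq 0$, the identity reduces to $f''/(f')^{3}=c/g_0$, which is solved by $f(u)=\lambda u^{1/2}$ with $c=-2g_0/\lambda^{2}\neq 0$; then $h$ is a genuine quadratic and one checks from (4.2) that the resulting hypersurface is minimal. Eliminating the parameters, it is the quadric graph $x_4=g_0 x_1-(g_0/\lambda^{2})x_3^{2}+(g_0/\lambda^{2})x_2^{2}$, which is neither a hyperplane nor isotropically congruent to the Scherk product. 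Symmetrically, $f$ affine with $f'=f_0\neq 0$ gives $g''=c\,f_0^{2}/(1+f_0^{2})$, again permitting $c\neq 0$ with $g$ quadratic. These are exactly the families (iii) and (iv) of the paper's Theorem~4.3, specialised to $H_0=0$. The paper's own proof shares this lacuna: in its Case~2 the claim that $A,B,C,D$ must all be constant in $A(u)+B(v)=-h_0\,C(v)D(u)$ is incorrect (mixed differentiation only gives $C'D'=0$, so one of $C,D$ is constant but the other need not be), and precisely the above quadric examples survive. So the ``careful bookkeeping'' you flagged as the main obstacle is not merely delicate --- neither your argument nor the paper's can be completed as written, because the theorem as stated omits these quadric solutions.
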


\begin{proof}
(4.2) leads to%
\begin{equation}
\frac{f^{\prime \prime }g^{\prime }}{\left( f^{\prime }\right) ^{3}}%
+g^{\prime \prime }\frac{1+\left( f^{\prime }\right) ^{2}}{\left( f^{\prime
}\right) ^{2}}+h^{\prime \prime }=0,  \tag{4.4}
\end{equation}%
which implies $h^{\prime \prime }=h_{0},$ $h_{0}\in \mathbb{R}$. To solve
(4.4) we distinguish two cases depending on $h_{0}=0$ or not.

\begin{enumerate}
\item $h_{0}=0.$ (4.4) can be rewritten as%
\begin{equation}
\frac{f^{\prime \prime }}{f^{\prime }\left[ 1+\left( f^{\prime }\right) ^{2}%
\right] }+\frac{g^{\prime \prime }}{g^{\prime }}=0.  \tag{4.5}
\end{equation}%
The situation that $f^{\prime \prime }=g^{\prime \prime }=0$ is a solution
to (4.5), which leads $M^{3}$ to be a non-isotropic hyperplane. If $%
f^{\prime \prime }g^{\prime \prime }\neq 0,$ (4.5) implies%
\begin{equation}
\frac{f^{\prime \prime }}{f^{\prime }\left[ 1+\left( f^{\prime }\right) ^{2}%
\right] }=\lambda =-\frac{g^{\prime \prime }}{g^{\prime }}  \tag{4.6}
\end{equation}%
for $\lambda \in \mathbb{R},$ $\lambda \neq 0.$ By solving (4.6), we derive%
\begin{equation}
f\left( u\right) =\pm \frac{1}{\lambda }\arccos \left( c_{1}e^{\lambda
u}\right) ,\text{ }g\left( v\right) =-\frac{c_{2}}{\lambda }e^{-\lambda v}, 
\tag{4.7}
\end{equation}%
for $c_{1},c_{2}\in \mathbb{R},$ $c_{1}c_{2}\neq 0.$ Up to suitable
translations and constants, $M^{3}$ can be written by a change of parameter
in (4.7) as 
\begin{equation*}
r\left( \tilde{u},\tilde{v},w\right) =\left( \frac{1}{\lambda }\ln
\left\vert \frac{\cos \lambda \tilde{u}}{\lambda \tilde{v}}\right\vert ,0,%
\tilde{u},\tilde{v}\right) +w\left( 0,1,0,0\right) .
\end{equation*}%
This implies the hypothesis of the theorem.

\item $h_{0}\neq 0.$ (4.4) can be rearranged as%
\begin{equation}
A\left( u\right) +B\left( v\right) =-h_{0}C\left( v\right) D\left( u\right) ,
\tag{4.8}
\end{equation}%
where 
\begin{equation*}
A\left( u\right) =\frac{f^{\prime \prime }}{f^{\prime }\left[ 1+\left(
f^{\prime }\right) ^{2}\right] },\text{ }B\left( v\right) =\frac{g^{\prime
\prime }}{g^{\prime }},\text{ }C\left( v\right) =\frac{1}{g^{\prime }},\text{
}D\left( u\right) =\frac{\left( f^{\prime }\right) ^{2}}{1+\left( f^{\prime
}\right) ^{2}}.
\end{equation*}%
One deduces from (4.9) that $A,B,C,D$ are all constant. The fact that $C,D$
are constant yields $f^{\prime \prime }=0$ and $g^{\prime \prime }=0.$ This
however contradicts with $h_{0}\neq 0$ in (4.8).
\end{enumerate}
\end{proof}

\begin{theorem}
Let $M^{3}$ be a translation hypersurface of type 2 in $\mathbb{I}^{4}$ with
nonzero constant mean curvature $H_{0}$. Then, for $\lambda ,\mu ,\xi \in 
\mathbb{R},$ one of the following occurs:

\begin{enumerate}
\item[(i)] $f=f\left( u\right) ,$ $f^{\prime }\neq 0,$ $g\left( v\right)
=\lambda ,$ $h\left( w\right) =\frac{3H_{0}}{2}w^{2};$

\item[(ii)] $f\left( u\right) =\lambda u,$ $g\left( v\right) =\mu v,$ $%
h\left( w\right) =\frac{3H_{0}}{2}w^{2},$ $\lambda \mu \neq 0;$

\item[(iii)] $f\left( u\right) =\lambda u^{\frac{1}{2}},$ $g\left( v\right)
=\mu v,$ $h\left( w\right) =\xi w^{2},$ $\lambda \mu \neq 0,$ $\xi \neq 
\frac{3H_{0}}{2};$

\item[(iv)] $f\left( u\right) =\lambda u,$ $g\left( v\right) =\mu v^{2},$ $%
h\left( w\right) =\xi w^{2},$ $\lambda \mu \neq 0,$ $\xi \neq \frac{3H_{0}}{2%
};$

\item[(v)] $M^{3}=S^{2}$ $\times P,$ where $S^{2}$ is the isotropic Scherk'
s surface of type 2 in $\mathbb{I}^{3}$ and $P$ is a parabolic circle in $%
\mathbb{I}^{2}$ with isotropic curvature $3H_{0}.$
\end{enumerate}
\end{theorem}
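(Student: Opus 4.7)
The plan is to exploit the fact that equation (4.2) decouples the $w$-dependence from the $(u,v)$-dependence. Rewriting (4.2) as
\[
h''(w)=3H_{0}-\frac{f''g'}{(f')^{3}}-g''\frac{1+(f')^{2}}{(f')^{2}},
\]
the right-hand side is independent of $w$, so $h''(w)=h_{0}$ is a real constant; after an isotropic motion that absorbs the linear-and-constant part of $h$ (into a permissible $x_{4}\mapsto x_{4}-ax_{2}+c$), we may take $h(w)=\tfrac{h_{0}}{2}w^{2}$. Setting $\tau:=3H_{0}-h_{0}$, (4.2) reduces to the master equation
\[
\frac{f''g'}{(f')^{3}}+g''\frac{1+(f')^{2}}{(f')^{2}}=\tau,
\]
which I refer to as $(\ast)$. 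A case analysis on the vanishing of $f''$ and $g''$ will produce the five alternatives in the statement.

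The four ``degenerate'' subcases (i)--(iv) are short. If $g'\equiv 0$, then $(\ast)$ forces $\tau=0$, so $h_{0}=3H_{0}$ and we land in (i). If $g'\not\equiv 0$ but $f''\equiv 0\equiv g''$, both $f$ and $g$ are linear and the left side of $(\ast)$ vanishes, giving $\tau=0$ and case (ii). If exactly $f''\equiv 0$, then $f(u)=\lambda u$ and $(\ast)$ yields $g''$ a nonzero constant, so $g$ is quadratic, which is case (iv). Symmetrically, if exactly $g''\equiv 0$, then $g$ is linear, $(\ast)$ forces $f''/(f')^{3}$ to be a nonzero constant, and an elementary integration (as in Theorem 4.1) gives $f(u)=\lambda u^{1/2}$, which is case (iii). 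Throughout, integration constants of $g$ and $h$ are absorbed by isotropic motions, and the sign constraints $\xi\neq 3H_{0}/2$ in (iii) and (iv) encode precisely $\tau\neq 0$.

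The main obstacle is case (v), where $f''g''\not\equiv 0$. I would differentiate $(\ast)$ in $v$ to eliminate $\tau$, obtaining
\[
\frac{f''}{(f')^{3}}g''+\frac{1+(f')^{2}}{(f')^{2}}g'''=0.
\]
Noting that $g'''\not\equiv 0$ (otherwise $f''\equiv 0$), separation of variables gives
\[
\frac{f''}{f'\bigl(1+(f')^{2}\bigr)}=-\lambda=-\frac{g'''}{g''}
\]
for some $\lambda\neq 0$. The $g$-equation integrates to $g''(v)=Ae^{\lambda v}$, hence $g'(v)=(A/\lambda)e^{\lambda v}+B$, while the $f$-equation is exactly the Scherk-type ODE (4.6) from Theorem 4.2, producing $f(u)=\pm\tfrac{1}{\lambda}\arccos(c_{1}e^{-\lambda u})$. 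Re-inserting these into $(\ast)$ and using $f''/(f')^{3}=-\lambda(1+(f')^{2})/(f')^{2}$ collapses the left side to
\[
\frac{1+(f')^{2}}{(f')^{2}}\,(g''-\lambda g')=\tau,
\]
and since $g''-\lambda g'=-\lambda B$ is constant while the prefactor is genuinely $u$-dependent (as $f''\not\equiv 0$), the only resolution is $B=0$, whence $\tau=0$ and therefore $h_{0}=3H_{0}$. The $(f,g)$-part thus reproduces Strubecker's isotropic Scherk surface $S^{2}$ of type 2 in $\mathbb{I}^{3}$, while $h(w)=\tfrac{3H_{0}}{2}w^{2}$ traces the parabolic circle $P\subset\mathbb{I}^{2}$ of isotropic curvature $3H_{0}$. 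The additive form of the parameterization then identifies $M^{3}$ with $S^{2}\times P$, which is case (v).
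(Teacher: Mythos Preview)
Your proof is correct and follows essentially the same route as the paper: first observe $h''=h_{0}$ is constant, then run a case analysis on which of $f'',g''$ vanish. The case organization differs cosmetically (the paper branches on $g'=g_{0}$ versus $g''\neq 0$, then on $f'=f_{0}$ versus $f''\neq 0$, rather than on the pair $(f'',g'')$), but the resulting five subcases match exactly.

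In the generic case (v) you are in fact more thorough than the paper. The paper simply asserts that $f''g''\neq 0$ in (4.9) ``implies $h_{0}=3H_{0}$ and (4.11)'' without justification. Your argument---differentiate $(\ast)$ in $v$, separate variables to get the Scherk ODE for $f$ and $g'''=\lambda g''$, integrate to $g'=(A/\lambda)e^{\lambda v}+B$, then re-substitute into $(\ast)$ to force $B=0$ and $\tau=0$---supplies exactly the missing steps. This is a genuine improvement in rigor over the paper's presentation, not a different method.
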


\begin{proof}
Reconsidering (4.2) leads to $h^{\prime \prime }=h_{0},$ $h_{0}\in \mathbb{R}
$ and therefore we get \ 
\begin{equation}
3H_{0}=\frac{f^{\prime \prime }g^{\prime }}{\left( f^{\prime }\right) ^{3}}%
+g^{\prime \prime }\frac{1+\left( f^{\prime }\right) ^{2}}{\left( f^{\prime
}\right) ^{2}}+h_{0}.  \tag{4.9}
\end{equation}%
To solve (4.9), we have two cases:

\begin{enumerate}
\item $g^{\prime }=g_{0},$ $g_{0}\in \mathbb{R}$. In particular; if $%
g_{0}=0, $ then we conclude $h_{0}=3H_{0}$ and 
\begin{equation*}
h\left( w\right) =\frac{3}{2}H_{0}w^{2}+c_{1}w+c_{1},\text{ }c_{1},c_{2}\in 
\mathbb{R},
\end{equation*}%
which implies the statement (i) of the theorem. Nevertheless; if $g_{0}\neq
0 $ then, by (4.9), we get%
\begin{equation}
\frac{3H_{0}-h_{0}}{g_{0}}=\frac{f^{\prime \prime }}{\left( f^{\prime
}\right) ^{3}}.  \tag{4.10}
\end{equation}%
If $3H_{0}=h_{0}$ in (4.10), we immediately achieve the proof of the
statement (ii) of the theorem. Otherwise, after solving (4.10), we obtain%
\begin{equation*}
f\left( u\right) =\pm \frac{g_{0}}{3H_{0}-h_{0}}\sqrt{\frac{-6H_{0}+2h_{0}}{%
g_{0}}u+c_{3}}+c_{4},
\end{equation*}%
where $3H_{0}\neq h_{0}$ and $c_{3},c_{4}\in \mathbb{R}.$ This gives the
proof of the statement (iii) of the theorem.

\item $g^{\prime \prime }\neq 0.$ We consider two cases:

\begin{enumerate}
\item $f^{\prime }=f_{0}\neq 0,$ $f_{0}\in \mathbb{R}$. (4.9) leads to%
\begin{equation*}
3H_{0}=\frac{1+f_{0}^{2}}{f_{0}^{2}}g^{\prime \prime }+h_{0},
\end{equation*}%
which implies the proof of the statement (iv) of the theorem.

\item $f^{\prime \prime }\neq 0.$ (4.9) implies $h_{0}=3H_{0}$ and%
\begin{equation}
\frac{f^{\prime \prime }}{\left( f^{\prime }\right) ^{3}}=\lambda \frac{%
1+\left( f^{\prime }\right) ^{2}}{\left( f^{\prime }\right) ^{2}}\text{ and }%
g^{\prime \prime }=-\lambda g^{\prime },  \tag{4.11}
\end{equation}%
where $\lambda \in \mathbb{R}$, $\lambda \neq 0.$ After solving (4.11), we
obtain 
\begin{equation}
f\left( u\right) =\pm \frac{1}{\lambda }\arccos \left( c_{1}e^{\lambda
u}\right) ,\text{ }g\left( v\right) =-\frac{c_{2}}{\lambda }e^{-\lambda v} 
\tag{4.12}
\end{equation}%
for $c_{1},c_{2}\in \mathbb{R},$ $c_{1}c_{2}\neq 0.$ Up to suitable
translations and constants, $M^{3}$ can be written by a change of parameter
in (4.12) as%
\begin{equation*}
r\left( \tilde{u},\tilde{v},w\right) =\left( \frac{1}{\lambda }\ln
\left\vert \frac{\cos \lambda \tilde{u}}{\lambda \tilde{v}}\right\vert ,0,%
\tilde{u},\tilde{v}\right) +\left( 0,w,0,\frac{3}{2}H_{0}w^{2}\right) ,
\end{equation*}%
which completes the proof of the theorem.
\end{enumerate}
\end{enumerate}
\end{proof}

\section{Translation hypersurfaces of type 3}

The Gauss-Kronecker and the mean curvature are respectively%
\begin{equation}
K=\frac{\left( h^{\prime }\right) ^{2}f^{\prime \prime }g^{\prime \prime
}h^{\prime \prime }}{\left( f^{\prime }g^{\prime }\right) ^{3}}  \tag{5.1}
\end{equation}%
and%
\begin{equation}
3H=h^{\prime }\left[ \frac{f^{\prime \prime }}{\left( f^{\prime }\right) ^{3}%
}+\frac{g^{\prime \prime }}{\left( g^{\prime }\right) ^{3}}\right]
+h^{\prime \prime }\left[ 1+\frac{1}{\left( f^{\prime }\right) ^{2}}+\frac{1%
}{\left( g^{\prime }\right) ^{2}}\right] .  \tag{5.2}
\end{equation}%
Note that the roles of $f$ and $g$ are symmetric in (5.2) and henceforth we
only discuss the situations depending on $f$ while solving it.

\begin{theorem}
A flat translation hypersurface of type 3 in $\mathbb{I}^{4}$ is a
cylindrical hypersurface with non-isotropic rulings. Furthermore; if one has
nonzero constant Gauss-Kronecker curvature, then the following occurs:%
\begin{equation*}
f\left( u\right) =\lambda u^{\frac{1}{2}},\text{ }g\left( v\right) =\mu v^{%
\frac{1}{2}},\text{ }h\left( w\right) =\xi w^{\frac{4}{3}},
\end{equation*}%
where $\lambda ,\mu ,\eta \in \mathbb{R}$ and $\lambda \mu \xi \neq 0.$
\end{theorem}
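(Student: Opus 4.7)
The plan is to exploit the factored form of the Gauss–Kronecker curvature in (5.1), which separates cleanly into pieces depending on $u$, $v$, and $w$ alone.

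For the flat case, set $K=0$ in (5.1). The denominator $(f'g')^{3}$ is nonzero by regularity, so the numerator $(h')^{2}f''g''h''$ must vanish identically. I would then walk through the four possibilities $h'\equiv 0$, $f''\equiv 0$, $g''\equiv 0$, $h''\equiv 0$ in turn. In each scenario one of the three generating curves collapses to a non-isotropic line: for instance, $h'\equiv 0$ or $h''\equiv 0$ forces $\gamma(w)=(w,0,0,\text{linear in }w)$, while $f''\equiv 0$ (resp.\ $g''\equiv 0$) makes $\alpha$ (resp.\ $\beta$) a non-isotropic line, using the regularity assumption that $f$ and $g$ are non-constant. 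In every case the parameterisation factors as a curve plus a non-isotropic straight line, so $M^{3}$ is a cylinder over a surface with non-isotropic rulings.

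For the case $K\equiv K_{0}\neq 0$, I would rewrite (5.1) as
\begin{equation*}
\frac{f''}{(f')^{3}}\cdot\frac{g''}{(g')^{3}}\cdot (h')^{2}h''\;=\;K_{0}.
\end{equation*}
Since the three factors depend on disjoint sets of variables and none of them can vanish, differentiating with respect to $u$ (or equivalently fixing two variables at a time) forces each factor to be a nonzero real constant:
\begin{equation*}
\frac{f''}{(f')^{3}}=\lambda,\qquad \frac{g''}{(g')^{3}}=\mu,\qquad (h')^{2}h''=\xi,
\end{equation*}
with $\lambda\mu\xi=K_{0}$. The first two ODEs are handled by the substitution $p=f'$ (respectively $p=g'$), giving $p^{-3}p'=\lambda$ and hence $p^{2}=-1/(2\lambda u+c)$; integrating once more and absorbing constants by translation in $u$ and an additive constant in $f$ yields $f(u)=\lambda' u^{1/2}$, and likewise $g(v)=\mu' v^{1/2}$. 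For the third ODE, setting $q=h'$ gives $q^{2}q'=\xi$, so $q^{3}=3\xi w+c$, and a further integration produces $h(w)=\xi' w^{4/3}$ up to translation. Renaming the constants recovers the stated forms with $\lambda\mu\xi\neq 0$.

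The only real subtlety is the separation-of-variables step: one must verify that the three factors are nowhere zero before concluding that each is individually constant. This is where the hypothesis $K_{0}\neq 0$ is used — it prevents any of $f''$, $g''$, $h''$, $h'$ from vanishing on an open set, and then standard connectedness/analytic-continuation reasoning gives the constancy on the whole domain. Once that is in place, integrating the three elementary ODEs is routine and, after absorbing constants of integration by translations of $u,v,w$ (which preserve the translation-hypersurface structure), delivers exactly the power functions stated in the theorem.
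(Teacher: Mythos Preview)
Your proposal is correct and follows essentially the same route as the paper: both use the factored form of (5.1) to conclude in the flat case that one of $f,g,h$ is linear (hence one generating curve is a non-isotropic line), and in the nonzero-constant case separate variables to obtain $\frac{f''}{(f')^{3}}=\lambda$, $\frac{g''}{(g')^{3}}=\mu$, $(h')^{2}h''=\xi$, then integrate. Your write-up is in fact slightly more careful than the paper's in flagging the separation-of-variables step and the need for $K_{0}\neq 0$ to guarantee nonvanishing of the factors.
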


\begin{proof}
(5.1) follows that $K$ vanishes if at least one of $f,g,h$ is a linear
function with respect to the given variable; that is, at least one of the
generating curves turns to be a non-isotropic line. Now, let us assume that
it is a nonzero constant. So, (5.1) leads to%
\begin{equation}
\frac{f^{\prime \prime }}{\left( f^{\prime }\right) ^{3}}=\lambda ,\text{ }%
\frac{g^{\prime \prime }}{\left( g^{\prime }\right) ^{3}}=\mu ,\text{ }%
\left( h^{\prime }\right) ^{2}h^{\prime \prime }=\xi ,  \tag{5.3}
\end{equation}%
for $\lambda ,\mu ,\xi \in \mathbb{R}$ and $\lambda \mu \xi \neq 0.$ After
solving (5.3), we obtain%
\begin{equation*}
f\left( u\right) =\pm \frac{1}{\lambda }\sqrt{-2\lambda u+c_{1}}+c_{2},\text{
}g\left( v\right) =\pm \frac{1}{\lambda }\sqrt{-2\mu v+c_{3}}+c_{4}
\end{equation*}%
and%
\begin{equation*}
h\left( w\right) =\frac{1}{4\xi }\left( 3\xi w+c_{5}\right) ^{\frac{4}{3}%
}+c_{6},
\end{equation*}%
where $c_{1},...,c_{6}\in \mathbb{R}$. This concludes the proof.
\end{proof}

\begin{theorem}
Let $M^{3}$ be a minimal translation hypersurface of type 3 in $\mathbb{I}%
^{4}.$ Then, it is either a non-isotropic hyperplane or one of the following
occurs:

\begin{enumerate}
\item[(i)] $f=f\left( u\right) ,$ $f^{\prime }\neq 0,$ $g=g\left( v\right) ,$
$g^{\prime }\neq 0,$ $h\left( w\right) =\lambda ;$

\item[(ii)] $M^{3}=S^{2}\times \mathbb{R}$, where $S^{2}$ is the isotropic
Scherk' s surface of type 2 in $\mathbb{I}^{3}$;

\item[(iii)] $f\left( u\right) =\lambda \left( -u\right) ^{\frac{1}{2}},$ $%
g\left( v\right) =\lambda v^{\frac{1}{2}},$ $h\left( w\right) =\mu w,$ $%
\lambda \mu \neq 0;$

\item[(iv)] $f\left( u\right) =\eta \ln \left\vert \frac{1+\sqrt{1+\kappa
e^{\lambda u}}}{1-\sqrt{1+\kappa e^{\lambda u}}}\right\vert $ or $f\left(
u\right) =\kappa e^{\lambda u},$ $g\left( v\right) =\mu \ln \left\vert \frac{%
1+\sqrt{1+\xi e^{\varpi v}}}{1-\sqrt{1+\xi e^{\varpi v}}}\right\vert $ or $%
g\left( v\right) =\xi e^{\varpi v},$ $h\left( w\right) =\rho e^{\tau w},$
where $\eta ,\kappa ,\lambda ,\mu ,\xi ,\varpi ,\rho ,\tau $ are nonzero
constants.
\end{enumerate}
\end{theorem}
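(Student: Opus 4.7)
The plan is to analyze $H=0$ applied to (5.2), which is the PDE
\begin{equation*}
h'(w)\big[P(u)+Q(v)\big] + h''(w)\big[1+1/(f')^2+1/(g')^2\big] = 0,
\end{equation*}
with $P(u)=f''/(f')^3$ and $Q(v)=g''/(g')^3$. The key identity $\bigl(1/(f')^2\bigr)'=-2P(u)$ lets me differentiate the PDE in $u$ and in $v$ to extract two clean separated relations,
\begin{equation*}
h'(w)\,P'(u)=2h''(w)\,P(u), \qquad h'(w)\,Q'(v)=2h''(w)\,Q(v),
\end{equation*}
and these, alongside the master PDE, determine the entire case structure.

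The first two branches are disposed of quickly. If $h'\equiv 0$, then $h$ is constant and the PDE is automatic, giving case (i). If $h'\not\equiv 0$ but $h''\equiv 0$, then $h$ is affine and the separated relations force $P,Q$ to be constants, while the master PDE yields $P+Q=0$; either $P=Q=0$ (whence $f,g,h$ are all affine and $M^{3}$ is a non-isotropic hyperplane) or the ODE $f''/(f')^{3}=p$ integrates through $\psi=1/(f')^{2}$ to a square root, yielding case (iii) once the common constant for $f$ and $g$ is matched via $P+Q=0$ (modulo the reflection $x_{3}\mapsto -x_{3}$).

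The substantive case is $h',h''\not\equiv 0$. Wherever $P$ is nonzero the first separated relation gives $P'(u)/P(u)=2h''(w)/h'(w)$, so both sides equal a common constant $2\lambda$, hence $h(w)=\rho e^{\tau w}$ (with $\tau=\lambda$, up to translation) and $P(u)=Ce^{2\lambda u}$; the same is true for $Q$. If $P\equiv Q\equiv 0$, the master PDE reduces to $h''\cdot B=0$, contradicting $h''\not\equiv 0$. If exactly one, say $P$, vanishes identically (so $f(u)=\alpha u+\beta$), the surviving equation in $(g,h)$ is the type 2 minimality equation for a translation surface in $\mathbb{I}^{3}$ (after absorbing $1+1/\alpha^{2}$), and $M^{3}$ is a cylinder with non-isotropic rulings along $(1,\alpha,0,0)$ whose cross-section is the isotropic Scherk surface of type 2 in $\mathbb{I}^{3}$: this is case (ii). If neither vanishes, integrating $\psi'=-2Ce^{2\lambda u}$ gives $\psi(u)=D-(C/\lambda)e^{2\lambda u}$, whence $f'=\pm\psi^{-1/2}$; the antiderivative is a pure exponential when $D=0$ and a nested logarithm-square-root when $D\neq 0$, and the parallel dichotomy for $g$ gives the four combinations of case (iv).

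The main obstacle is the explicit integration in the last sub-case: the antiderivative $\int du/\sqrt{D+\alpha e^{2\lambda u}}$ has to be computed via the substitution $s=\sqrt{D+\alpha e^{2\lambda u}}$, reducing it to $\lambda^{-1}\int ds/(s^{2}-D)$ which, after factoring $\sqrt{D}$, yields the logarithm of $(1\pm\sqrt{1+\kappa e^{\lambda u}})/(1\mp\sqrt{1+\kappa e^{\lambda u}})$ appearing in case (iv). A secondary delicacy is the identification in case (ii) of the reduced equation with (4.4) from Theorem 4.2 under the absorption of $1+1/\alpha^{2}$, together with the check that $(1,\alpha,0,0)$ is indeed a non-isotropic direction in $\mathbb{I}^{4}$.
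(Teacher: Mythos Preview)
Your proof is correct and reaches the same four outcomes, but the organization differs from the paper's. The paper branches first on whether $f'$ is constant (invoking the $f\leftrightarrow g$ symmetry) and only then on $h''$; in each sub-case it separates variables directly from the master equation, e.g.\ in the hardest sub-case it observes that $h''/h'=-(P+Q)/B$ must equal a constant $\mu$, and then reduces to the first-order ODE $P+\mu/(f')^{2}=\xi$ (equation (5.9)) for $f$. You instead branch on $h',h''$ and exploit the identity $\bigl(1/(f')^{2}\bigr)'=-2P$ to differentiate the master PDE in $u$ and $v$, extracting the pair $h'P'=2h''P$, $h'Q'=2h''Q$. This gives a uniform mechanism: once $h''\not\equiv 0$, the ratio $h''/h'$ is forced to be constant and $P,Q$ are forced to be exponential (or identically zero) simultaneously, without needing to argue that the combined quotient $(P+Q)/B$ is separable. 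Your route is slightly slicker; the paper's is more hands-on but keeps closer track of the integration constants (for instance the relation $\xi+\rho=-\mu$ obtained by substituting back into the master PDE, which you tacitly use but do not state). Both approaches are valid.
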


\begin{proof}
Due to $H=0,$ (5.2) implies%
\begin{equation}
h^{\prime }\left[ \frac{f^{\prime \prime }}{\left( f^{\prime }\right) ^{3}}+%
\frac{g^{\prime \prime }}{\left( g^{\prime }\right) ^{3}}\right] +h^{\prime
\prime }\left[ 1+\frac{1}{\left( f^{\prime }\right) ^{2}}+\frac{1}{\left(
g^{\prime }\right) ^{2}}\right] =0.  \tag{5.4}
\end{equation}%
The situation that $h^{\prime }=0$ is a solution to (5.4), which is the
proof of the statement (i) of the theorem. Assume that $h^{\prime }\neq 0$.
In order to solve (5.4), we have to consider the following cases:

\begin{enumerate}
\item $f^{\prime }=f_{0},$ $f_{0}\in \mathbb{R}$, $f_{0}\neq 0.$ Then, (5.4)
reduces to%
\begin{equation}
\frac{g^{\prime \prime }h^{\prime }}{\left( g^{\prime }\right) ^{3}}%
+h^{\prime \prime }\left[ 1+\frac{1}{f_{0}^{2}}+\frac{1}{\left( g^{\prime
}\right) ^{2}}\right] =0.  \tag{5.5}
\end{equation}%
To solve (5.5), we have two possibilities: The first one is the situation
that $M^{3}$ is a non-isotropic hyperplane. The second one is that $%
g^{\prime \prime }h^{\prime \prime }\neq 0.$ So, (5.5) can be rewritten as%
\begin{equation}
\frac{f_{0}^{2}g^{\prime \prime }}{g^{\prime }\left[ \left(
1+f_{0}^{2}\right) \left( g^{\prime }\right) ^{2}+f_{0}^{2}\right] }=\lambda
=-\frac{h^{\prime \prime }}{h^{\prime }},  \tag{5.6}
\end{equation}%
where $\lambda \in \mathbb{R}$, $\lambda \neq 0.$ After solving (5.6), we
obtain%
\begin{equation*}
g\left( v\right) =\pm \frac{f_{0}}{\sqrt{1+f_{0}^{2}}}\arccos \left( c_{1}%
\left[ 1+f_{0}^{2}\right] e^{\lambda v}\right) ,\text{ }h\left( w\right) =%
\frac{-c_{2}}{\lambda }e^{-\lambda w},
\end{equation*}%
where $c_{1},c_{2}\in \mathbb{R}$, $c_{1}c_{2}\neq 0,$ which is the proof of
the statement (ii).

\item $f^{\prime \prime }\neq 0.$ By symmetry, we have $g^{\prime \prime
}\neq 0$ and distinguish two cases:

\begin{enumerate}
\item $h^{\prime }=h_{0},$ $h_{0}\in \mathbb{R}$, $h_{0}\neq 0.$ (5.4)
implies 
\begin{equation}
\frac{f^{\prime \prime }}{\left( f^{\prime }\right) ^{3}}=\lambda =-\frac{%
g^{\prime \prime }}{\left( g^{\prime }\right) ^{3}}.  \tag{5.7}
\end{equation}%
Solving (5.7) leads to 
\begin{equation*}
f\left( u\right) =\pm \frac{1}{\lambda }\sqrt{-2\lambda u+c_{1}}+c_{2},\text{
}g\left( v\right) =\pm \frac{1}{\lambda }\sqrt{2\lambda v+c_{3}}+c_{4},
\end{equation*}%
where $c_{1},...,c_{4}\in \mathbb{R}$, which indicates the proof of the
statement (iii) of the theorem.

\item $h^{\prime \prime }\neq 0.$ (5.4) yields that $\frac{h^{\prime \prime }%
}{h^{\prime }}=\mu ,$ $\mu \in \mathbb{R}$, $\mu \neq 0,$ or $h\left(
w\right) =c_{1}e^{\mu w},$ $c_{1}\in \mathbb{R},$ $c_{1}\neq 0.$ Thereby,
(5.4) reduces to%
\begin{equation}
\frac{f^{\prime \prime }}{\left( f^{\prime }\right) ^{3}}+\frac{\mu }{\left(
f^{\prime }\right) ^{2}}+\frac{g^{\prime \prime }}{\left( g^{\prime }\right)
^{3}}+\frac{\mu }{\left( g^{\prime }\right) ^{2}}=-\mu ,  \tag{5.8}
\end{equation}%
which implies%
\begin{equation}
\frac{f^{\prime \prime }}{\left( f^{\prime }\right) ^{3}}+\frac{\mu }{\left(
f^{\prime }\right) ^{2}}=\xi ,  \tag{5.9}
\end{equation}%
and%
\begin{equation}
\frac{g^{\prime \prime }}{\left( g^{\prime }\right) ^{3}}+\frac{\mu }{\left(
g^{\prime }\right) ^{2}}=\rho ,  \tag{5.10}
\end{equation}%
where $\xi \in \mathbb{R}$ and $\rho =-\mu -\xi .$ From (5.9), we have 
\begin{equation}
f^{\prime }\left( u\right) =\pm \left( \frac{\xi }{\mu }+\frac{c_{2}}{\mu }%
e^{2\mu u}\right) ^{\frac{-1}{2}},\text{ }c_{2}\in \mathbb{R},\text{ }%
c_{2}\neq 0.  \tag{5.11}
\end{equation}%
If $\xi =0$ in (5.11), then we can derive $f\left( u\right) =\mp \left( 
\frac{c_{2}}{\mu }\right) ^{\frac{-1}{2}}e^{-\mu u}.$ Otherwise, we get%
\begin{equation*}
f\left( u\right) =-\frac{1}{\sqrt{\mu \xi }}\tanh ^{-1}\left( \sqrt{1+\frac{%
c_{2}}{\xi }e^{2\mu u}}\right) =-\frac{1}{2\sqrt{\mu \xi }}\ln \left\vert 
\frac{1+\sqrt{1+\frac{c_{2}}{\xi }e^{2\mu u}}}{1-\sqrt{1+\frac{c_{2}}{\xi }%
e^{2\mu u}}}\right\vert .
\end{equation*}%
Same solutions are also satisfied to (5.10) and therefore we complete the
proof.
\end{enumerate}
\end{enumerate}
\end{proof}

\begin{theorem}
Let a translation hypersurface of type 3 have nonzero constant mean
curvature $H_{0}$ in $\mathbb{I}^{4}$. Then, for $\lambda ,\mu ,\xi \in 
\mathbb{R},$ one of the following occurs:

\begin{enumerate}
\item[(i)] $f\left( u\right) =\lambda u,$ $g\left( v\right) =\mu v,$ $%
h\left( w\right) =\frac{3H_{0}\left( \lambda \mu \right) ^{2}}{\left(
\lambda \mu \right) ^{2}+\lambda ^{2}+\mu ^{2}}w^{2},$ $\lambda \mu \neq 0;$

\item[(ii)] $f\left( u\right) =\lambda u,$ $g\left( v\right) =\left( \frac{%
-2\mu }{3H_{0}}v\right) ^{\frac{1}{2}},$ $h\left( w\right) =\mu w,$ $\lambda
\mu \neq 0;$

\item[(iii)] $f\left( u\right) =\lambda u^{\frac{1}{2}},$ $g\left( v\right)
=\mu v^{\frac{1}{2}},$ $h\left( w\right) =\xi w,$ $\lambda \mu \xi \neq 0.$
\end{enumerate}
\end{theorem}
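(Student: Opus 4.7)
The starting point is formula (5.2) with $H\equiv H_0\neq 0$. Introducing the abbreviations $A(u)=f''/(f')^3$, $B(v)=g''/(g')^3$, $P(u)=1/(f')^2$, $Q(v)=1/(g')^2$ and noting the key identities $P'=-2A$ and $Q'=-2B$, the equation becomes
\begin{equation*}
3H_0=h'(w)\,[A(u)+B(v)]+h''(w)\,[1+P(u)+Q(v)].
\end{equation*}
My plan is to differentiate this master identity once with respect to $u$ and once with respect to $v$, producing the separable subsidiary relations $h'(w)A'(u)=2h''(w)A(u)$ and $h'(w)B'(v)=2h''(w)B(v)$, and then to organise the argument according to whether $A$ and $B$ vanish identically or not.

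The analysis splits into three mutually exclusive cases. First, if $A\equiv 0$ and $B\equiv 0$, then $f,g$ are affine with nonzero slopes $\lambda,\mu$, and the master identity collapses to a constant value of $h''$, yielding the quadratic $h$ of conclusion (i). Second, if exactly one of $A,B$ vanishes identically, say $A\equiv 0$ by the symmetric role of $f$ and $g$ in (5.2), then $f$ is affine and the $v$-separation gives $B'/B=2h''/h'=2\beta$ constant; substituting the exponential ansatz $h'=c_0 e^{\beta w}$, $B=B_0 e^{2\beta v}$ back into the master identity forces $\beta=0$ (by the cancellation argument described below), after which $h$ is affine, $B\equiv B_0$ is constant, and the compatibility $3H_0=h' B_0$ together with the ODE $g''=B_0(g')^3$ integrates to the square-root $g$ of conclusion (ii). Third, if neither $A$ nor $B$ is identically zero, the two separations force a common constant $2\alpha=2h''/h'=A'/A=B'/B$; the same cancellation rules out $\alpha\neq 0$, so $h$ is affine, $A$ and $B$ are constants, and integrating $f''=A_0(f')^3$ and $g''=B_0(g')^3$ delivers the power-law functions of conclusion (iii), with the slopes tied by $\xi(A_0+B_0)=3H_0$.

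The core technical step I anticipate is the cancellation argument excluding the exponential subcases. Writing $P$ explicitly via $P'=-2A$ yields $P(u)=-A_0 e^{2\alpha u}/\alpha+\mathrm{const}$, and analogously for $Q$; on substitution, the terms $A_0 e^{2\alpha u}$ and $B_0 e^{2\alpha v}$ multiplying $h'=c_0 e^{\alpha w}$ exactly cancel the corresponding contributions coming from $\alpha c_0 e^{\alpha w}\cdot P(u)$ and $\alpha c_0 e^{\alpha w}\cdot Q(v)$, leaving an identity of the form $3H_0=\mathrm{const}\cdot e^{\alpha w}$, incompatible with $H_0\neq 0$ unless $\alpha=0$. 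Everything else amounts to elementary integrations of first-order ODEs of the type $p'/p^3=\mathrm{const}$, producing the explicit expressions stated in the theorem.
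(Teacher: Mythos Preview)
Your argument is correct and complete, but it is organised differently from the paper's proof. The paper proceeds by a direct case split on whether $f'$, then $g'$, then $h'$ are constant, and in the two non-trivial subcases where $h''\neq 0$ it divides (5.2) by $h'$ and differentiates in $w$: in the subcase $f'=f_0$, $g''\neq 0$ this produces a polynomial identity in $(g')^2$ with $w$-dependent coefficients, forcing $(h''/h')'=0$ and then $h''=0$; in the subcase $f''g''\neq 0$ it yields an equation whose right side $1+1/(f')^2+1/(g')^2$ would have to be constant, again impossible. Your route instead differentiates the master identity in $u$ and in $v$, exploiting the clean relation $P'=-2A$ (and $Q'=-2B$) to obtain the separated conditions $h'A'=2h''A$ and $h'B'=2h''B$, and then handles all the exclusion work with a single cancellation argument showing that the exponential ansatz $h'=c_0e^{\alpha w}$ collapses the identity to $3H_0=\text{const}\cdot e^{\alpha w}$. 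What you gain is uniformity: the same mechanism disposes of the case $A\equiv 0$, $B\not\equiv 0$ and the case $A,B\not\equiv 0$, whereas the paper uses two separate ad hoc arguments. What the paper's approach buys is that it never needs to integrate $P'=-2A$ explicitly or track integration constants like $P_0,Q_0$; its contradictions are read off directly from the structure of (5.15)--(5.16). Both arguments are short, and the conclusions (i)--(iii) drop out identically once $h''=0$ is forced in the non-affine cases.
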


\begin{proof}
Due to $H_{0}\neq 0,$ $h$ cannot be constant in (5.2). Nevertheless, we have
to distinguish several cases to solve (5.2):

\begin{enumerate}
\item $f^{\prime }=f_{0},$ $f_{0}\in \mathbb{R}$, $f_{0}\neq 0.$ Then (5.2)
follows%
\begin{equation}
3H_{0}=\frac{g^{\prime \prime }h^{\prime }}{\left( g^{\prime }\right) ^{3}}%
+h^{\prime \prime }\left[ \lambda +\frac{1}{\left( g^{\prime }\right) ^{2}}%
\right] .  \tag{5.12}
\end{equation}%
where $\lambda =\frac{f_{0}^{2}+1}{f_{0}^{2}}.$ In order to solve (5.12) the
regularity provides two cases:

\begin{enumerate}
\item $g^{\prime }=g_{0},$ $g_{0}\in \mathbb{R}$, $g_{0}\neq 0.$ (5.12)
yields%
\begin{equation*}
h\left( w\right) =\frac{3H_{0}}{\mu }w^{2}+c_{1}w+c_{2},
\end{equation*}%
where $c_{1},c_{2},\mu \in \mathbb{R},$ $\mu =\lambda +\frac{1}{g_{0}^{2}}.$
This is the proof of statement (i) of the theorem.

\item $g^{\prime \prime }\neq 0.$ We have two cases:

\begin{enumerate}
\item $h^{\prime }=h_{0},$ $h_{0}\in \mathbb{R}$, $h_{0}\neq 0.$ By (5.12),
we derive%
\begin{equation}
\frac{3H_{0}}{h_{0}}=\frac{g^{\prime \prime }}{\left( g^{\prime }\right) ^{3}%
}.  \tag{5.13}
\end{equation}%
Solving (5.13) leads to%
\begin{equation*}
g\left( v\right) =\pm \frac{h_{0}}{3H_{0}}\sqrt{\frac{-6H_{0}}{h_{0}}v+c_{3}}%
,
\end{equation*}%
where $c_{3}\in \mathbb{R}$ and this proves the statement (ii) of the
theorem.

\item $h^{\prime \prime }\neq 0.$ Dividing (5.12) with $h^{\prime }$ and
taking partial derivative respect to $w$ gives the following polynomial
equation on $\left( g^{\prime }\right) ^{2}$%
\begin{equation*}
\left( \frac{3H_{0}h^{\prime \prime }}{\left( h^{\prime }\right) ^{2}}%
+\lambda \left( \frac{h^{\prime \prime }}{h^{\prime }}\right) ^{\prime
}\right) \left( g^{\prime }\right) ^{2}+\left( \frac{h^{\prime \prime }}{%
h^{\prime }}\right) ^{\prime }=0,
\end{equation*}%
which yields a contradiction.
\end{enumerate}
\end{enumerate}

\item $f^{\prime \prime }\neq 0.$ The symmetry gives $f^{\prime \prime
}g^{\prime \prime }\neq 0.$ We have two cases:

\begin{enumerate}
\item $h^{\prime }=h_{0},$ $h_{0}\in \mathbb{R},$ $h_{0}\neq 0.$ (5.2)
reduces to%
\begin{equation}
\frac{3H_{0}}{h_{0}}=\frac{f^{\prime \prime }}{\left( f^{\prime }\right) ^{3}%
}+\frac{g^{\prime \prime }}{\left( g^{\prime }\right) ^{3}}  \tag{5.14}
\end{equation}%
Solving (5.14) gives%
\begin{equation*}
f\left( u\right) =\pm \frac{1}{\frac{3H_{0}}{h_{0}}-\lambda }\sqrt{-2\left( 
\frac{3H_{0}}{h_{0}}-\lambda \right) u+c_{1}}+c_{2}
\end{equation*}%
and%
\begin{equation*}
g\left( v\right) =\pm \frac{1}{\lambda }\sqrt{-2\lambda v+c_{3}}+c_{4},
\end{equation*}%
for $\lambda ,c_{1},...,c_{4}\in \mathbb{R},$ $\lambda \neq 0$. This is the
proof of the statement (iii) of the theorem.

\item $h^{\prime \prime }\neq 0.$ Dividing (5.2) with $h^{\prime }$ and
taking its partial derivative with respect to $w,$ we deduce%
\begin{equation}
-3H_{0}\frac{h^{\prime \prime }}{\left( h^{\prime }\right) ^{2}}=\left( 
\frac{h^{\prime \prime }}{h^{\prime }}\right) ^{\prime }\left[ 1+\frac{1}{%
\left( f^{\prime }\right) ^{2}}+\frac{1}{\left( g^{\prime }\right) ^{2}}%
\right] .  \tag{5.15}
\end{equation}%
Both-hand side must be nonzero in (5.15) and thus we can rewrite it as
follows:%
\begin{equation}
-3H_{0}\frac{h^{\prime \prime }}{\left( h^{\prime }\right) ^{2}}\left[
\left( \frac{h^{\prime \prime }}{h^{\prime }}\right) ^{\prime }\right]
^{-1}=1+\frac{1}{\left( f^{\prime }\right) ^{2}}+\frac{1}{\left( g^{\prime
}\right) ^{2}}.  \tag{5.16}
\end{equation}%
This is a contradiction due to the fact that the right-hand side of (5.16)
cannot be a constant.
\end{enumerate}
\end{enumerate}
\end{proof}

\section{Translation hypersurfaces of type 4}

The Gauss-Kronecker and the mean curvature are respectively%
\begin{equation}
K=\frac{8f^{\prime \prime }g^{\prime \prime }h^{\prime \prime }}{49\left(
f^{\prime }-g^{\prime }\right) ^{5}}  \tag{6.1}
\end{equation}%
and%
\begin{equation}
\left. 
\begin{array}{c}
3H=\frac{2}{49\left( f^{\prime }-g^{\prime }\right) ^{3}}\left\{ \left[
37\left( g^{\prime }\right) ^{2}+2\left( h^{\prime }\right) ^{2}-10g^{\prime
}h^{\prime }+49\right] f^{\prime \prime }+\right. \\ 
\left. +\left[ 37\left( f^{\prime }\right) ^{2}+2\left( h^{\prime }\right)
^{2}-10f^{\prime }h^{\prime }+49\right] g^{\prime \prime }+2h^{\prime \prime
}\left( f^{\prime }-g^{\prime }\right) ^{2}\right\} .%
\end{array}%
\right.  \tag{6.2}
\end{equation}%
As in previous section, the roles of $f$ and $g$ are symmetric in (6.2) and,
while solving it, the situations depending on $f$ are only considered.

\begin{theorem}
There does not exist a translation hypersurface of type 4 in $\mathbb{I}^{4}$
with constant Gauss-Kronecker curvature, except the cylindrical
hypersurfaces with non-isotropic rulings.
\end{theorem}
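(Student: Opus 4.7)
The flat case is immediate from (6.1): the numerator $f''g''h''$ vanishes if and only if at least one of $f,g,h$ is affine in its variable, i.e., the corresponding generating curve degenerates to a non-isotropic line, and then $M^{3}$ is a cylindrical hypersurface whose rulings are that linear generator. The substance of the theorem is therefore to rule out nonzero constant Gauss-Kronecker curvature.

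Assume $K\equiv K_{0}\neq 0$. Rewriting (6.1) as
\[
f''(u)\,g''(v)\,h''(w)\;=\;\tfrac{49}{8}\,K_{0}\bigl(f'(u)-g'(v)\bigr)^{5},
\]
the left-hand side must be nonvanishing on the entire domain, so $f''$, $g''$, $h''$ are everywhere nonzero; moreover, since the right-hand side does not involve $w$, the factor $h''(w)$ must be a nonzero constant $h_{0}$. Setting $C=\frac{49 K_{0}}{8 h_{0}}\neq 0$, the problem reduces to the two-variable identity
\[
f''(u)\,g''(v)\;=\;C\bigl(f'(u)-g'(v)\bigr)^{5},
\]
which is valid everywhere, with $f'-g'\neq 0$ by the regularity built into the type-4 parametrisation.

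The main step is a logarithmic-style differentiation that eliminates $g$. Differentiating the last display in $u$ gives $f'''(u)g''(v)=5C f''(u)(f'-g')^{4}$; dividing this by the identity itself yields
\[
\bigl(f'(u)-g'(v)\bigr)\,f'''(u)\;=\;5\bigl(f''(u)\bigr)^{2}.
\]
The right-hand side depends only on $u$, so differentiating in $v$ forces $g''(v)\,f'''(u)=0$; since $g''\neq 0$, this gives $f'''\equiv 0$, and then the previous display forces $f''\equiv 0$, contradicting $K_{0}\neq 0$.

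I do not expect a genuine obstacle in this argument: in contrast with the type 2 and type 3 analyses of the preceding sections, the denominator in (6.1) does not involve $h$, so $h''$ separates off cleanly and the problem collapses to a single functional equation in two variables whose resolution is purely algebraic. The only delicate point is the bookkeeping of the nonvanishing hypotheses on $f''$, $g''$, $h''$ and $f'-g'$, but these are all guaranteed a priori once $K_{0}\neq 0$ is assumed together with regularity.
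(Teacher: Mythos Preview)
Your argument is correct and mirrors the paper's proof: both reduce to $h''=h_{0}$ constant, obtain the two-variable identity, and derive $(f'-g')f'''=5(f'')^{2}$ by differentiating in $u$; you then differentiate once more in $v$ to force $f'''=0$, while the paper reaches the same conclusion by reading off the coefficient of $g'$, which is equivalent. The flat case you include explicitly is handled the same way (it is implicit in the paper's statement).
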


\begin{proof}
Assume that $K=K_{0}\neq 0$ and thus $f^{\prime \prime }g^{\prime \prime
}h^{\prime \prime }\neq 0$. $\left( 6.1\right) $ follows%
\begin{equation}
\frac{49K_{0}}{8h_{0}}=\frac{f^{\prime \prime }g^{\prime \prime }}{\left(
f^{\prime }-g^{\prime }\right) ^{5}},  \tag{6.3}
\end{equation}%
where $h^{\prime \prime }=h_{0}\neq 0.$ The partial derivative of (6.3) with
respect to $u$ yields 
\begin{equation}
f^{\prime \prime \prime }\left( f^{\prime }-g^{\prime }\right) -5\left(
f^{\prime \prime }\right) ^{2}=0.  \tag{6.4}
\end{equation}%
The fact that the coefficient of the term $g^{\prime }$ in (6.4) must be
zero leads to the contradiction $f^{\prime \prime }=0.$
\end{proof}

\begin{theorem}
Let a translation hypersurface of type 4 be minimal in $\mathbb{I}^{4}.$
Then it is either a non-isotropic hyperplane or, for $\lambda ,\mu ,\xi \in 
\mathbb{R},$ one of the following occurs:

\begin{enumerate}
\item[(i)] $f\left( u\right) =\lambda u,$ $g\left( v\right) =\lambda v-\frac{%
1}{\mu }\ln \left\vert \mu v\right\vert ,$ $h\left( w\right) =\frac{5\lambda 
}{2}w+\frac{1}{\mu }\ln \left\vert \cos \xi w\right\vert ,$ $\mu \xi \neq 0;$

\item[(ii)] $f\left( u\right) =\lambda u-\frac{1}{\mu }\ln \left\vert \cos
\xi w\right\vert ,$ $g\left( v\right) =\lambda v+\frac{1}{\mu }\ln
\left\vert \cos \xi w\right\vert ,$ $h\left( w\right) =\frac{37\lambda }{5}%
w, $ $\mu \xi \neq 0.$
\end{enumerate}
\end{theorem}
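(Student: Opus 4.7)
I would rewrite the minimality condition from (6.2) compactly as
\[
A(g',h')f''+A(f',h')g''+2(f'-g')^{2}h''=0,\qquad A(x,y):=37x^{2}+2y^{2}-10xy+49,
\]
in which each summand depends on only two of the three independent variables $u,v,w$. The plan is to partition the analysis according to which of $f'',g'',h''$ vanish identically, since this controls the separability of the PDE and matches the two families (i) and (ii).

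First I would dispose of the degenerate configurations. If all three second derivatives vanish, $M^{3}$ is a non-isotropic hyperplane. If exactly two vanish, the equation collapses: the pair $f''\equiv g''\equiv 0$ with $h''\not\equiv 0$ forces $f'=g'$, contradicting the type~4 regularity $f'-g'\neq 0$; the pair $g''\equiv h''\equiv 0$ with $f''\not\equiv 0$ (and its mirror under $f\leftrightarrow g$) would force $A(g'_{0},h'_{0})=0$ for the constants $g'_{0},h'_{0}$, but $A$ viewed as a quadratic in $x$ has discriminant $-196y^{2}-7252<0$, a contradiction.

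Next I would treat the case $f''\equiv 0$, $g''h''\not\equiv 0$ (and its mirror $g''\equiv 0$), which produces (i). Setting $f'=\lambda$ collapses the equation to $A(\lambda,h')g''+2h''(\lambda-g')^{2}=0$, which separates as
\[
\frac{(\lambda-g')^{2}}{g''}=-\frac{A(\lambda,h')}{2h''}=\frac{1}{\mu}\in\mathbb{R}.
\]
The $v$-ODE $g''=\mu(\lambda-g')^{2}$ integrates via $q=g'$ to $g(v)=\tfrac{1}{\mu}\bigl(-\ln|\mu v|\bigr)+\lambda v$; the $w$-ODE is quadratic in $h'$ and, after completing the square in $A(\lambda,\cdot)$ and integrating via $\arctan$, yields $h(w)=\tfrac{5\lambda}{2}w+\tfrac{1}{\mu}\ln|\cos\xi w|$, matching (i). The parallel case $h''\equiv 0$, $f''g''\not\equiv 0$ produces (ii): with $h'=h_{0}$ constant, the equation $A(g',h_{0})f''+A(f',h_{0})g''=0$ separates as $f''/A(f',h_{0})=-g''/A(g',h_{0})=c$, giving two autonomous ODEs of the same shape; completing the square in the quadratic denominator and integrating via $\arctan$ produces the trigonometric-logarithmic forms in (ii), with $h_{0}=37\lambda/5$ appearing naturally as the axis of symmetry of $A(\cdot,h_{0})$.

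The main technical step is ruling out $f''g''h''\not\equiv 0$. I would differentiate the master equation with respect to $u$, divide by $f''$, then differentiate with respect to $v$, and finally with respect to $w$; after cancellations this telescopes to
\[
\frac{f'''}{f''}+\frac{g'''}{g''}=-\frac{2h'''}{5h''},
\]
whose left-hand side depends only on $u,v$ and right-hand side only on $w$. Separation produces constants $\alpha,\beta,\gamma$ with $f'''/f''=\alpha$, $g'''/g''=\beta$, $h'''/h''=\gamma$ and $\alpha+\beta=-2\gamma/5$. Substituting these back into the equation obtained after the first two differentiations and then applying $\partial/\partial u$ and $\partial/\partial v$ would kill the terms $\beta f''$ and $\alpha g''$ respectively, forcing $\alpha=\beta=0$; the residual identity then reads $h''=0$, contradicting the working hypothesis. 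The hard part is the bookkeeping through these successive differentiations, as the polynomial cross-terms of $A$ and of $2(f'-g')^{2}$ proliferate before they eventually cancel.
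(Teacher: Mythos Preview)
Your proposal is correct. For the degenerate cases and for cases (i) and (ii) you follow exactly the paper's route: with $f'=\lambda$ the equation (6.5) separates as in the paper's (6.6)--(6.7), and with $h'=h_{0}$ it separates as in (6.8). Your discriminant check $-196y^{2}-7252<0$ showing that $A(x,y)>0$ everywhere disposes explicitly of the ``exactly two second derivatives vanish'' subcases, which the paper leaves implicit.

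The only genuine difference is in ruling out $f''g''h''\not\equiv 0$. The paper (Appendix~2) differentiates (6.5) in $w$ first, obtaining
\[
2h'h''(f''+g'')-5h''(f''g'+f'g'')+h'''(f'-g')^{2}=0,
\]
and then splits into subcases according to whether $h'''=0$, whether $f''+g''=0$, etc., each time substituting back to reach a contradiction. Your route is to differentiate in $u$, divide by $f''$, then differentiate in $v$ and in $w$; this telescopes (as you claim, and the computation does check) to
\[
\frac{f'''}{f''}+\frac{g'''}{g''}=-\frac{2h'''}{5h''},
\]
whence $f'''/f''=\alpha$, $g'''/g''=\beta$ are constants. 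Back-substituting into the twice-differentiated equation gives $74\beta f'+74\alpha g'-10(\alpha+\beta)h'-4h''=0$, and a further $\partial_u$, $\partial_v$ forces $\alpha=\beta=0$, leaving $h''=0$. Both arguments are short; yours is slightly more symmetric in $f,g,h$ and avoids the subcase splits of Appendix~2, while the paper's choice of differentiating in $w$ first keeps the intermediate expressions marginally smaller. Either way the bookkeeping you flag as ``the hard part'' is modest: the mixed terms in $A$ and in $2(f'-g')^{2}$ collapse after at most three differentiations.
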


\begin{remark}
If $\lambda =0$ in the statement (ii) of Theorem 6.2, then $%
M^{3}=S^{2}\times \mathbb{R}$, where $S^{2}$ is \textit{isotropic Scherk' s
surface of type 3} in $\mathbb{I}^{4}$ with codimension 2. For details, see
Appendix 1.
\end{remark}

\begin{proof}
(6.2) follows%
\begin{equation}
\left. 
\begin{array}{l}
0=\left[ 37\left( g^{\prime }\right) ^{2}+2\left( h^{\prime }\right)
^{2}-10g^{\prime }h^{\prime }+49\right] f^{\prime \prime } \\ 
+\left[ 37\left( f^{\prime }\right) ^{2}+2\left( h^{\prime }\right)
^{2}-10f^{\prime }h^{\prime }+49\right] g^{\prime \prime }+2h^{\prime \prime
}\left( f^{\prime }-g^{\prime }\right) ^{2}.%
\end{array}%
\right.  \tag{6.5}
\end{equation}%
We have two cases to solve (6.5):

\begin{enumerate}
\item $f^{\prime }=f_{0},$ $f_{0}\in \mathbb{R}.$ (6.5) reduces to%
\begin{equation}
\frac{g^{\prime \prime }}{\left( f_{0}-g^{\prime }\right) ^{2}}+\frac{%
2h^{\prime \prime }}{2\left( h^{\prime }\right) ^{2}-10f_{0}h^{\prime
}+37f_{0}^{2}+49}=0.  \tag{6.6}
\end{equation}%
The situation that $g^{\prime \prime }=h^{\prime \prime }=0,$ $g^{\prime
}\neq f_{0},$ leads $M^{3}$ to be a non-isotropic hyperplane. If $g^{\prime
\prime }h^{\prime \prime }\neq 0,$ (6.6) implies%
\begin{equation}
\frac{g^{\prime \prime }}{\left( f_{0}-g^{\prime }\right) ^{2}}=\lambda =%
\frac{-2h^{\prime \prime }}{2\left( h^{\prime }\right) ^{2}-10f_{0}h^{\prime
}+37f_{0}^{2}+49},  \tag{6.7}
\end{equation}%
where $\lambda \in \mathbb{R},$ $\lambda \neq 0.$ After solving (6.7), we
conclude%
\begin{equation*}
g\left( v\right) =f_{0}v-\frac{1}{\lambda }\ln \left\vert \lambda
v+c_{1}\right\vert +c_{2}
\end{equation*}%
and%
\begin{equation*}
h\left( w\right) =\frac{5f_{0}}{2}w+\frac{1}{\lambda }\ln \left\vert \cos
\left( -\frac{7\lambda \sqrt{2+f_{0}^{2}}}{2}w+c_{3}\right) \right\vert
+c_{4},
\end{equation*}%
where $c_{1},...,c_{4}\in \mathbb{R}$. This is the proof of the statement
(i) of the theorem.

\item $f^{\prime \prime }\neq 0.$ By symmetry, we deduce $g^{\prime \prime
}\neq 0.$ We have two cases:

\begin{enumerate}
\item $h^{\prime }=h_{0},$ $h_{0}\in \mathbb{R}.$ (6.5) can be rewritten as%
\begin{equation}
\frac{f^{\prime \prime }}{37\left( f^{\prime }\right) ^{2}-10h_{0}f^{\prime
}+49+h_{0}^{2}}=\lambda =\frac{-g^{\prime \prime }}{37\left( g^{\prime
}\right) ^{2}-10h_{0}f^{\prime }+49+h_{0}^{2}}.  \tag{6.8}
\end{equation}%
Solving (6.8), we conclude%
\begin{equation*}
f\left( u\right) =\frac{-1}{37\lambda }\ln \left\vert \cos \left( \mu
\lambda u+c_{1}\right) \right\vert +\frac{5h_{0}}{37}u+c_{2}
\end{equation*}%
and%
\begin{equation*}
g\left( v\right) =\frac{1}{37\lambda }\ln \left\vert \cos \left( -\mu
\lambda v+c_{3}\right) \right\vert +\frac{5h_{0}}{37}v+c_{4},
\end{equation*}%
where $\mu =\sqrt{1813+12h_{0}^{2}},$ which proves the statement (ii) of the
theorem.

\item $h^{\prime \prime }\neq 0.$ This case yields a contradiction, see
Appendix 2.
\end{enumerate}
\end{enumerate}
\end{proof}

\begin{theorem}
Let a translation hypersurface of type 4 in $\mathbb{I}^{4}$ have nonzero
constant mean curvature. Then, for $\lambda ,\mu ,\xi \in \mathbb{R},$ one
of the following occurs:
\end{theorem}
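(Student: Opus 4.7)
The plan is to substitute $3H=3H_0$ into (6.2) and clear the denominator, obtaining the master identity
\begin{equation*}
\tfrac{147}{2}H_0(f'-g')^3 = [37(g')^2+2(h')^2-10g'h'+49]f'' + [37(f')^2+2(h')^2-10f'h'+49]g'' + 2h''(f'-g')^2.
\end{equation*}
Because $u,v,w$ are independent variables, this is in effect a separation-of-variables condition that I would analyze by the same branching used in Theorems 6.1--6.2, namely according to which of $f''$, $g''$, $h''$ vanish identically. The regularity hypothesis $f'-g'\neq 0$ and the symmetry in $(f,g)$ remain in force throughout.

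\emph{Case 1: $f'=f_0$ constant} (and, by symmetry, the analogous case $g'=g_0$ constant). Then the bracket multiplying $f''$ disappears, and the master identity reduces to
\begin{equation*}
\tfrac{147}{2}H_0(f_0-g')^3 = [37f_0^2+2(h')^2-10f_0h'+49]g'' + 2h''(f_0-g')^2,
\end{equation*}
a relation in $v$ and $w$ alone. Sub-branching on whether $g''$ or $h''$ vanishes separates this into ODEs for $g$ and $h$ that integrate by elementary quadratures, yielding the explicit closed-form expressions expected in the statement.

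\emph{Case 2: $f''g''\neq 0$.} The key step is to differentiate the master identity with respect to $w$; since both $f'$ and $g'$ are $w$-independent, the LHS vanishes and one obtains
\begin{equation*}
0 = h''f''(2h'-5g') + h''g''(2h'-5f') + h'''(f'-g')^2.
\end{equation*}
In the sub-case $h'=h_0$ constant both $h''$ and $h'''$ vanish, the differentiated identity is trivial, and the original master identity collapses to a clean separation of a function of $u$ from a function of $v$, which integrates explicitly (as in Theorem 6.2(ii)); the inhomogeneous term $\tfrac{147}{2}H_0(f_0-g_0)\cdot\ldots$ then pins down the constants in terms of $H_0$.

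The remaining sub-case $h''\neq 0$ is where I expect the main obstacle, and it should parallel Appendix 2 from the minimal setting. The strategy is: divide the differentiated identity by $h''$, then take successive partial derivatives in $u$ and $v$ to strip off the $w$-dependent coefficients. This produces a polynomial identity in $f'(u)$ and $g'(v)$ whose coefficients are functions of $w$ alone; variable separation then forces each coefficient to vanish, and combining the resulting constraints should contradict $f''g''h''\neq 0$. The novelty compared with Theorem 6.2 is the inhomogeneous cubic $\tfrac{147}{2}H_0(f'-g')^3$, which contributes extra monomials when one applies further derivatives in $u$ or $v$; so the polynomial bookkeeping is heavier than in the minimal case, but the qualitative conclusion --- contradiction --- should persist, leaving only the solutions produced in Cases 1 and 2a as candidates for the statement.
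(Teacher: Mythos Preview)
Your overall architecture matches the paper's, but Case~2a contains a genuine error. With $f''g''\neq 0$ and $h'=h_0$ constant, the master identity does \emph{not} collapse to a clean separation ``function of $u$ equals function of $v$'' as it did in Theorem~6.2(ii). There, minimality meant the left side was zero; here the left side is $\tfrac{147}{2}H_0(f'-g')^3$, and this cubic in the mixed quantity $f'(u)-g'(v)$ cannot be split additively. (Your parenthetical ``$(f_0-g_0)$'' suggests you momentarily forgot that $f',g'$ are non-constant in this branch.) The paper handles this sub-case in Appendix~3: it rewrites the identity as
\[
\frac{\lambda(f'-g')^3}{f''g''}=\frac{A(u)}{f''}+\frac{B(v)}{g''},
\]
then differentiates in $u$ and $v$, introduces the auxiliary ratios $C=f'''/(f'')^2$, $D=g'''/(g'')^2$ and further quotients $E,F,G,I$, and after several rounds of separation forces a contradiction. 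No solutions arise here; both listed families (i) and (ii) come exclusively from Case~1.

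Two smaller points. In Case~1 you must still dispose of the sub-branch $g''\neq 0$, $h''\neq 0$: the paper differentiates (6.9) in $w$ to obtain (6.11)--(6.13) and reaches a contradiction, so this needs to be said. In Case~2b your worry about the cubic term is misplaced: since $(f'-g')^3$ is $w$-independent, $\partial_w$ annihilates it, and the resulting identity (7.8) is literally the same as (7.2) from the minimal case divided by $h''$. The contradiction argument (Appendix~4) is therefore no heavier than Appendix~2, not heavier as you anticipated.
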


\begin{enumerate}
\item[(i)] $f\left( u\right) =\lambda u,$ $g\left( v\right) =\mu v,$ $%
h\left( w\right) =\xi w^{2},$ $\lambda \neq \mu ,$ $\xi \neq 0;$

\item[(ii)] $f\left( u\right) =\lambda u,$ $g\left( v\right) =\lambda v+\mu
v^{\frac{1}{2}},$ $h\left( w\right) =\xi w,$ $\mu \neq 0$.
\end{enumerate}

\begin{proof}
We have several cases to solve (6.2):

\begin{enumerate}
\item $f^{\prime }=f_{0}\in \mathbb{R}.$ (6.2) reduces to%
\begin{equation}
\lambda \left( f_{0}-g^{\prime }\right) ^{3}=\left[ 2\left( h^{\prime
}\right) ^{2}-10f_{0}h^{\prime }+37f_{0}^{2}+49\right] g^{\prime \prime
}+2h^{\prime \prime }\left( f_{0}-g^{\prime }\right) ^{2},  \tag{6.9}
\end{equation}%
where $\lambda =\frac{147H_{0}}{2}\neq 0.$ If $g^{\prime }=g_{0}\in \mathbb{R%
},$ $f_{0}\neq g_{0}$ in (6.9), then we immediately have the proof of the
statement (i) of the theorem. Next we assume $g^{\prime \prime }\neq 0$ and
consider the following cases:

\begin{enumerate}
\item $h^{\prime }=h_{0}\in \mathbb{R}.$ (6.9) follows%
\begin{equation}
\frac{g^{\prime \prime }}{\left( f_{0}-g^{\prime }\right) ^{3}}=\mu , 
\tag{6.10}
\end{equation}%
for $\mu =\frac{\lambda }{37f_{0}^{2}+2h_{0}^{2}-10h_{0}f_{0}+49}.$ Solving
(6.10) leads to%
\begin{equation*}
g\left( v\right) =f_{0}v\pm \frac{1}{\mu }\left( 2\mu v+c_{1}\right) ^{\frac{%
1}{2}}+c_{2},\text{ }c_{1},c_{2}\in \mathbb{R},
\end{equation*}%
which is the proof of the statement (ii) of the theorem.

\item $h^{\prime \prime }\neq 0.$ The partial derivative of (6.9) with
respect to $w$\ gives%
\begin{equation}
\frac{g^{\prime \prime }}{\left( f_{0}-g^{\prime }\right) ^{2}}+\frac{%
h^{\prime \prime \prime }}{h^{\prime \prime }\left( 2h^{\prime
}-5f_{0}\right) }=0.  \tag{6.11}
\end{equation}%
where $h^{\prime \prime \prime }\neq 0$ due to $g^{\prime \prime }\neq 0.$
(6.11) implies%
\begin{equation}
\frac{g^{\prime \prime }}{\left( f_{0}-g^{\prime }\right) ^{2}}=\rho =-\frac{%
h^{\prime \prime \prime }}{h^{\prime \prime }\left( 2h^{\prime
}-5f_{0}\right) },  \tag{6.12}
\end{equation}%
where $\rho \in \mathbb{R},$ $\rho \neq 0.$ Considering (6.12) into (6.9)
leads to%
\begin{equation}
\lambda \left( f_{0}-g^{\prime }\right) =\left[ 2\left( h^{\prime }\right)
^{2}-10h^{\prime }f_{0}+37f_{0}^{2}+49\right] \rho +2h^{\prime \prime }, 
\tag{6.13}
\end{equation}%
which is no possible since the left-hand side of (6.13) cannot be a constant.
\end{enumerate}

\item $f^{\prime \prime }\neq 0.$ The symmetry follows $g^{\prime \prime
}\neq 0.$ We have two cases depending on $h^{\prime \prime }=0$ or not.
These cases however imply some contradictions, see Appendix 3 and Appendix 4.
\end{enumerate}
\end{proof}

\section{Appendix}

This appendix provides a detailed explanation for some calculations ignored
in the proofs of Theorem 6.2 and Theorem 6.3. \bigskip

\noindent \textbf{Appendix 1.} \textit{Isotropic Scherk's surface of type 3}
in $\mathbb{I}^{4}$ \textit{with codimension} $2$. \newline
\noindent The formulas of the Gaussian and the mean curvatures for a surface
in $\mathbb{I}^{4}$ with codimension 2 can be found in \cite{AM}.

The absolute of $\mathbb{I}^{4}$ gives rise to three types of the
translation surfaces whose both generating curves lie in perpendicular
hyperplanes:

\begin{enumerate}
\item[Type 1.] Both generating curves lie in isotropic hyperplanes
determined by the equations:%
\begin{equation*}
x_{1}+x_{2}+x_{3}=\pi ,\text{ }2x_{1}-x_{2}-x_{3}=\pi .
\end{equation*}%
The obtained translation surface in $\mathbb{I}^{4}$ is parameterized by%
\begin{equation*}
r\left( u,v\right) =\left( u+v,u+v,-2u+v,f\left( u\right) +g\left( v\right)
\right) ,
\end{equation*}%
where the generating curves are%
\begin{equation*}
\alpha \left( u\right) =\left( u,u,-2u+\pi ,f\left( u\right) \right) ,\text{ 
}\beta \left( v\right) =\left( v,v,v-\pi ,g\left( v\right) \right) .
\end{equation*}

\item[Type 2.] One generating curve lies in non-isotropic hyperplane and
other one in isotropic hyperplane determined by the equations:%
\begin{equation*}
x_{2}+x_{3}+x_{4}=\pi ,\text{ }x_{1}+x_{2}-x_{3}=\pi .
\end{equation*}%
The obtained translation surface in $\mathbb{I}^{4}$ is parameterized by%
\begin{equation*}
r\left( u,v\right) =\left( f\left( u\right) +v+\pi ,u+v,u+2v,g\left(
v\right) -2u+\pi \right) ,
\end{equation*}%
where the generating curves are%
\begin{equation*}
\alpha \left( u\right) =\left( f\left( u\right) ,u,u,-2u+\pi \right) ,\text{ 
}\beta \left( v\right) =\left( v+\pi ,v,2v,g\left( v\right) \right) .
\end{equation*}

\item[Type 3.] Both generating curves lie in non-isotropic hyperplanes
determined by the equations:%
\begin{equation*}
-2x_{2}+x_{3}+x_{4}=\pi ,\text{ }2x_{2}+x_{3}+3x_{4}=\pi .
\end{equation*}%
The obtained translation surface in $\mathbb{I}^{4}$ is parameterized by%
\begin{equation*}
r\left( u,v\right) =\left( f\left( u\right) +g\left( v\right) ,u+v,u+v,u-v+%
\frac{4\pi }{3}\right) ,
\end{equation*}%
where the generating curves are%
\begin{equation*}
\alpha \left( u\right) =\left( f\left( u\right) ,u,u,u+\pi \right) ,\text{ }%
\beta \left( v\right) =\left( g\left( v\right) ,v,v,-v+\frac{\pi }{3}\right)
.
\end{equation*}
\end{enumerate}

If a translation surface of type 3 in $\mathbb{I}^{4}$ is minimal then we
can achieve 
\begin{equation}
\frac{f^{\prime \prime }}{\left( f^{\prime }\right) ^{2}+2}+\frac{g^{\prime
\prime }}{\left( g^{\prime }\right) ^{2}+2}=0.  \tag{7.1}
\end{equation}%
The solution of (7.1) parameterizes so-called \textit{the isotropic Scherk's
surface of type 3} in $\mathbb{I}^{4}.$

\begin{remark}
The study of above three types of translation surfaces in $\mathbb{I}^{4}$
with prescribed curvature could be an interesting problem.
\end{remark}

\noindent \textbf{Appendix 2.} $h^{\prime \prime }\neq 0.$ \newline
\noindent The partial derivative of (6.5) with respect to $w$ gives%
\begin{equation}
2h^{\prime }h^{\prime \prime }\left( f^{\prime \prime }+g^{\prime \prime
}\right) -5h^{\prime \prime }\left( f^{\prime \prime }g^{\prime }+f^{\prime
}g^{\prime \prime }\right) +h^{\prime \prime \prime }\left( f^{\prime
}-g^{\prime }\right) ^{2}=0.  \tag{7.2}
\end{equation}%
If $h^{\prime \prime \prime }=0$ in (7.2), then it reduces to%
\begin{equation*}
2h^{\prime }\left( f^{\prime \prime }+g^{\prime \prime }\right) -5\left(
f^{\prime \prime }g^{\prime }+f^{\prime }g^{\prime \prime }\right) =0,
\end{equation*}%
which implies $f^{\prime \prime }+g^{\prime \prime }=0$ and $f^{\prime
\prime }g^{\prime }+f^{\prime }g^{\prime \prime }=0$ or $f^{\prime
}-g^{\prime }=0.$ That is no possible due to the regularity. Hence, we
deduce $h^{\prime \prime \prime }\neq 0.$ Dividing (7.2) with $h^{\prime
\prime }$ and then taking its parital derivative with respect to $w,$ we
conclude%
\begin{equation}
2h^{\prime \prime }\left( f^{\prime \prime }+g^{\prime \prime }\right)
+\left( \frac{h^{\prime \prime \prime }}{h^{\prime \prime }}\right) ^{\prime
}\left( f^{\prime }-g^{\prime }\right) ^{2}=0,  \tag{7.3}
\end{equation}%
which yields two cases:

\begin{enumerate}
\item $f^{\prime \prime }=-g^{\prime \prime }.$ It follows from (7.3) that $%
h^{\prime \prime \prime }=\lambda h^{\prime \prime },$ $\lambda \neq 0.$
Putting those into (7.2) leads to%
\begin{equation*}
5f^{\prime \prime }+\lambda \left( f^{\prime }-g^{\prime }\right) =0,
\end{equation*}%
which is no possible.

\item $f^{\prime \prime }\neq -g^{\prime \prime }.$ (7.3) can rewritten as%
\begin{equation*}
\frac{f^{\prime \prime }+g^{\prime \prime }}{\left( f^{\prime }-g^{\prime
}\right) ^{2}}+\frac{\left( h^{\prime \prime \prime }/h^{\prime \prime
}\right) ^{\prime }}{2h^{\prime \prime }}=0,
\end{equation*}%
which leads to%
\begin{equation}
f^{\prime \prime }+g^{\prime \prime }=\mu \left( f^{\prime }-g^{\prime
}\right) ^{2},\text{ }\mu \neq 0.  \tag{7.4}
\end{equation}%
The partial derivative of (7.4) with respect to $u$ yields%
\begin{equation*}
f^{\prime \prime \prime }=2\mu f^{\prime }f^{\prime \prime }-2\mu f^{\prime
\prime }g^{\prime },
\end{equation*}%
in which the fact that the coefficient of the term $g^{\prime }$ must vanish
leads to the contradiction $f^{\prime \prime }=0.$
\end{enumerate}

\bigskip

\noindent \textbf{Appendix 3.} $f^{\prime \prime }g^{\prime \prime }\neq 0$
and $h^{\prime }=h_{0}\in \mathbb{R}.$ \newline
\noindent (6.2) can be rearranged as%
\begin{equation}
\frac{\lambda \left( f^{\prime }-g^{\prime }\right) ^{3}}{f^{\prime \prime
}g^{\prime \prime }}=\frac{A\left( u\right) }{f^{\prime \prime }}+\frac{%
B\left( v\right) }{g^{\prime \prime }},  \tag{7.5}
\end{equation}%
where $A\left( u\right) =37\left( f^{\prime }\right) ^{2}-10h_{0}f^{\prime
}+2h_{0}^{2}+49$ and $B\left( v\right) =37\left( g^{\prime }\right)
^{2}-10h_{0}g^{\prime }+2h_{0}^{2}+49$. We have two cases:

\begin{enumerate}
\item $f^{\prime \prime }=f_{0}\in \mathbb{R},$ $f_{0}\neq 0.$ The partial
derivative of (7.5) with respect to $u$ and $v$ gives 
\begin{equation*}
f^{\prime }g^{\prime \prime \prime }-g^{\prime }g^{\prime \prime \prime
}+2\left( g^{\prime \prime }\right) ^{2}=0,
\end{equation*}%
which yields the contradiction $g^{\prime \prime }=0$ due to the fact that
the coefficient of the term $f^{\prime }$ must vanish.

\item $f^{\prime \prime \prime }g^{\prime \prime \prime }\neq 0.$ The
partial derivative of (7.5) with respect to $u$ and $v$ gives%
\begin{equation}
6=3\left( C\left( u\right) -D\left( v\right) \right) \left( f^{\prime
}-g^{\prime }\right) +C\left( u\right) D\left( v\right) \left( f^{\prime
}-g^{\prime }\right) ^{2},  \tag{7.6}
\end{equation}%
where $C\left( u\right) =\frac{f^{\prime \prime \prime }}{\left( f^{\prime
\prime }\right) ^{2}}$ and $D\left( v\right) =\frac{g^{\prime \prime \prime }%
}{\left( g^{\prime \prime }\right) ^{2}}.$ If $C=C_{0}\in \mathbb{R},$ $%
C_{0}\neq 0,$ then, by taking twice partial derivative of (7.5) with respect
to $u$ leads to the contradiction $g^{\prime \prime \prime }=0.$ Therefore, $%
C$ is no constant, neither is $D$ by symmetry. Taking partial derivative of
(7.6) with respect to $u$ and $v$ and then dividing with the product $%
C^{\prime }D^{\prime }$ gives%
\begin{equation*}
2\underset{E\left( u\right) }{\underbrace{\frac{\left( f^{\prime }C\right)
^{\prime }}{C^{\prime }}}}.\underset{F\left( v\right) }{\underbrace{\frac{%
\left( g^{\prime }D\right) ^{\prime }}{D^{\prime }}}}=\underset{G\left(
u\right) }{\underbrace{\frac{\left[ \left( f^{\prime }\right) ^{2}C\right]
^{\prime }-3f^{\prime \prime }}{C^{\prime }}}}+\underset{I\left( v\right) }{%
\underbrace{\frac{\left[ \left( g^{\prime }\right) ^{2}D\right] ^{\prime
}-3g^{\prime \prime }}{D^{\prime }}}},
\end{equation*}%
where $E,F,G,I$ must be constant, i.e. $E=E_{0},$ $F=F_{0},$ $G=G_{0},$ $%
I=I_{0}.$ This yields 
\begin{equation}
C\left( u\right) =\frac{3}{f^{\prime }-E_{0}},\text{ }D\left( v\right) =%
\frac{3}{g^{\prime }-F_{0}}.  \tag{7.7}
\end{equation}%
Substituting (7.7) into (7.6) yields the following%
\begin{equation*}
\frac{2E_{0}F_{0}}{3}+\left( \frac{E_{0}}{3}-F_{0}\right) g^{\prime }+\left[
-E_{0}+\frac{F_{0}}{3}\right] f^{\prime }+\frac{2f^{\prime }g^{\prime }}{3}%
=0,
\end{equation*}%
which leads to the contradiction $f^{\prime \prime }=0$ or $g^{\prime \prime
}=0.$
\end{enumerate}

\bigskip

\noindent \textbf{Appendix 4.} $f^{\prime \prime }g^{\prime \prime
}h^{\prime \prime }\neq 0.$ \newline
\noindent The partial derivative of (6.2) with respect to $w$\ gives%
\begin{equation}
\left[ 2h^{\prime }-5g^{\prime }\right] f^{\prime \prime }+\left[ 2h^{\prime
}-5f^{\prime }\right] g^{\prime \prime }+\frac{h^{\prime \prime \prime }}{%
h^{\prime \prime }}\left( f^{\prime }-g^{\prime }\right) ^{2}=0.  \tag{7.8}
\end{equation}%
If $h^{\prime \prime }=h_{0}\in \mathbb{R},$ $h_{0}\neq 0,$ (7.8) reduces to%
\begin{equation}
\left[ 2h^{\prime }-5g^{\prime }\right] f^{\prime \prime }+\left[ 2h^{\prime
}-5f^{\prime }\right] g^{\prime \prime }=0.  \tag{7.9}
\end{equation}%
The partial derivative of (7.9) with respect to $w$ leads to $f^{\prime
\prime }=-g^{\prime \prime }$ and therefore the contradiction $f^{\prime
}-g^{\prime }=0$ is obtained. Henceforth, we asssume $h^{\prime \prime
\prime }\neq 0.$ There are two more cases:

\begin{enumerate}
\item $h^{\prime \prime \prime }=\lambda h^{\prime \prime },$ $\lambda \in 
\mathbb{R},$ $\lambda \neq 0.$ Then (7.8) reduces to%
\begin{equation}
\left[ 2h^{\prime }-5g^{\prime }\right] f^{\prime \prime }+\left[ 2h^{\prime
}-5f^{\prime }\right] g^{\prime \prime }+\lambda \left( f^{\prime
}-g^{\prime }\right) ^{2}=0.  \tag{7.10}
\end{equation}%
The partial derivative of (7.10) with respect to $w$ gives $f^{\prime \prime
}=\mu =-g^{\prime \prime },$ $\mu \in \mathbb{R},$ $\mu \neq 0.$ Thus,
(7.10) yields%
\begin{equation*}
5g^{\prime \prime }-\lambda \left( f^{\prime }-g^{\prime }\right) =0,
\end{equation*}%
which implies the contradiction $\lambda =0.$

\item $\left( \frac{h^{\prime \prime \prime }}{h^{\prime \prime }}\right)
^{\prime }\neq 0.$ The partial derivative of (7.8) with respect to $w$ gives%
\begin{equation}
2h^{\prime \prime }\left( f^{\prime \prime }+g^{\prime \prime }\right)
+\left( \frac{h^{\prime \prime \prime }}{h^{\prime \prime }}\right) ^{\prime
}\left( f^{\prime }-g^{\prime }\right) ^{2}=0,  \tag{7.11}
\end{equation}%
in which $\left( \frac{h^{\prime \prime \prime }}{h^{\prime \prime }}\right)
^{\prime }=\xi h^{\prime \prime },$ $\xi \in \mathbb{R},$ $\xi \neq 0.$
Thereby (7.11) reduces to%
\begin{equation}
2\left( f^{\prime \prime }+g^{\prime \prime }\right) +\xi \left( f^{\prime
}-g^{\prime }\right) ^{2}=0.  \tag{7.12}
\end{equation}%
The partial derivative of (7.12) with respect to $u$ and $v$ concludes the
contradiction $\xi =0.$
\end{enumerate}

\end{document}